\newtheorem{theorem}{Theorem}[section]
\newtheorem{lemma}[theorem]{Lemma}
\newtheorem{corollary}[theorem]{Corollary}
\newtheorem{proposition}[theorem]{Proposition}
\newtheorem{remmark}[theorem]{Remark}
\newtheorem{example}[theorem]{Example}
\newcommand{\cp}{\mathop{\operator@font cp}}
\newcommand{\range}{\mathop{\operator@font range}}
\newcommand{\rank}{\mathop{\operator@font rank}}
\newcommand{\dom}{\mathop{\operator@font dom}}
\newcommand{\Real}{\mathop{\operator@font Re}}
\newcommand{\cont}{\mathop{\operator@font Cont_w}}
\newcommand{\alg}{\mathop{\operator@font Alg\,\mathcal{N}}}
\newcommand{\tr}{\mathop{\operator@font tr}}
\newcommand{\sgn}{\mathop{\operator@font sgn}}
\newcommand{\sot}{\mathop{\operator@font SOT}}
\newcommand{\aff}{\mathop{\operator@font Aff}}
\newcommand{\kn}{\mathop{\operator@font \mathcal{K}(\mathcal{N})}}
\newcommand{\rad}{\mathop{\operator@font Rad}}
\renewcommand{\span}{\mathop{\operator@font span}}
\renewcommand{\rad}{\mathop{\operator@font Rad(\mathcal{N})}}
\numberwithin{equation}{section}
\begin{document}
\title{Compact multiplication operators on nest algebras}
\author{G. Andreolas}
\author{M. Anoussis}
\address{\hspace{-0.7em}Department of Mathematics, University of the Aegean, 832\,00
Karlovassi, Samos, Greece}
\email{gandreolas@aegean.gr}
\address{\hspace{-0.7em}Department of Mathematics, University of the Aegean, 832\,00
Karlovassi, Samos, Greece}
\email{mano@aegean.gr}

\subjclass[2010]{Primary 47L35, Secondary 47B07.}
\keywords{Nest algebra, compact multiplication operators, elementary operator, weakly compact, Calkin algebra, 
Jacobson radical.}

\begin{abstract}
 Let $\mathcal{N}$ be a nest on a Hilbert space $H$ and $\alg$ the corresponding nest algebra. We obtain a
characterization of the compact and weakly compact multiplication operators defined on nest algebras. This characterization
leads to a description of the closed ideal generated by the compact elements of $\alg$. We also show that there
is no
non-zero weakly compact multiplication operator on $\alg/\alg\cap \mathcal{K}(H)$.
\end{abstract}

\maketitle

\section{INTRODUCTION}

Let $\mathcal{A}$ be a Banach algebra. A \textit{multiplication operator} $M_{a,b}:\mathcal{A}\rightarrow
\mathcal{A}$ corresponding to $a,b\in\mathcal{A}$ is given by $M_{a,b}(x)=axb$. Properties of compact
multiplication operators
have been investigated since 1964 when Vala published his work ``On compact sets of compact operators'' \cite{1964}. Let
$\mathcal{X}$ be a normed space and $\mathcal{B}(\mathcal{X})$ the space of all bounded linear maps from $\mathcal{X}$ into
$\mathcal{X}$. Vala proved that a non-zero multiplication operator
$M_{a,b}:\mathcal{B}(\mathcal{X})\rightarrow\mathcal{B}(\mathcal{X})$
is
compact if and only if the operators $a\in\mathcal{B}(\mathcal{X})$ and $b\in\mathcal{B}(\mathcal{X})$ are both
compact.

This concept was further investigated by Ylinen in \cite{1972} who proved a similar result for abstract C*-algebras. An
element $a$ of a Banach algebra $\mathcal{A}$ is called \textit{compact} if the multiplication operator
$M_{a,a}:\mathcal{A}\rightarrow\mathcal{A}$ is
compact. Ylinen shows that there exists an isometric $*$-representation $\pi$ of a C*-algebra $\mathcal{A}$ on a Hilbert
space $H$ such
that the operator $\pi(a)$ is compact if and only if $a$ is a compact element of $\mathcal{A}$. Ylinen showed in \cite{1975}
that
this is equivalent with the weak compactness of the map $\lambda_a:\mathcal{A}\rightarrow\mathcal{A}$, $\lambda_a(x)=ax$ (or
equivalently of
the map $\rho_a:\mathcal{A}\rightarrow\mathcal{A}$, $\rho_a(x)=xa$).

In the sequel, these results were generalized to various directions. Let $H$ be a Hilbert space. Akemann and Wright showed in
\cite{ake} that a
multiplication operator $M_{a,b}:\mathcal{B}(H)\rightarrow\mathcal{B}(H)$ is weakly compact if and only if either $a$ or
$b$ is a compact operator. A map
$\Phi:\mathcal{A}\rightarrow\mathcal{A}$ is called \textit{elementary} if $\Phi=\sum_{i=1}^m M_{a_i,b_i}$ for some
$a_i,b_i\in\mathcal{A}$, $i=1,\ldots,m$. Fong and Sourour showed that an elementary operator
$\Phi:\mathcal{B}(H)\rightarrow\mathcal{B}(H)$ is compact if and only if there exist compact operators
$c_i,d_i\in\mathcal{B}(H)$, $i=1,\ldots,m$ such that $\Phi=\sum_{i=1}^m M_{c_i,d_i}$ \cite{fs}. This result was expanded by
Mathieu on prime C*-algebras \cite{m} and later on general C*-algebras by Timoney \cite{tim}. In \cite{m} Mathieu
characterizes the weakly compact elementary operators on prime C*-algebras as well.

From the description of the compact elementary operators by Fong and Sourour, the following conjecture arose: \textit{If
$\Phi$ is a compact elementary operator on the Calkin algebra on a separable Hilbert space, then $\Phi=0$.} This conjecture
was confirmed in \cite{apf} by Apostol and Fialkow and by Magajna in \cite{mag}. In \cite{m} Mathieu proves that
if $\Phi$ is weakly compact, then $\Phi=0$ as well. 

The weak compactness of multiplication operators has been studied in a Banach space setting by Saskmann - Tylli and 
Johnson - Schechtman in \cite{st} and \cite{js} respectively. In \cite{st} the authors give some sufficient conditions 
for 
weak compactness of $M_{a,b}:\mathcal{B}(E)\to\mathcal{B}(E)$, where $E$ is a Banach space. They also provide necessary 
and sufficient conditions for weak compactness of $M_{a,b}$ in case
of some concrete Banach spaces. In \cite{js} the authors 
give a classification of weakly compact multiplication operators on 
$\mathcal{B}(L_p(0,1))$, $1<p<\infty$, which in particular answers a question raised in \cite{st}.

The present work is a study of the compactness properties of multiplication operators defined on nest algebras. Note
that the compactness of the inner derivations defined on nest algebras, that is a special class of elementary operators,
have been studied by Peligrad in \cite{pel}. He characterized the weakly compact
derivations of a nest algebra and obtained necessary and sufficient conditions so that a nest algebra admits
compact derivations.  If
$\mathcal{N}$
is
a
nest, we denote by $\alg$ the corresponding nest algebra. In the second section of the paper we prove a necessary and
sufficient condition for the compactness of multiplication operators defined from $\alg$ into $\alg$. We close the section,
showing by example that there
exist compact multiplication operators on $\alg$ that can not be written as multiplication operators with compact symbols.
In the third section we determine the closed ideal generated by the compact elements of a nest algebra.
In the fourth section of the paper we characterize the weakly compact multiplication operators defined on nest
algebras.
In the last section we show that there are not non-zero weakly compact multiplication operators on
$\alg/\alg\cap\mathcal{K}(H)$ exactly as in
the case of Calkin algebra (i.e. when $\mathcal{N}=\{0,H\}$) \cite{apf}, \cite{mag}, \cite{m}. 

Let us introduce some notation and definitions that will be used throughout the paper. If $H$ is a Hilbert space, then
$\mathcal{B}(H)$ is the space of all bounded linear
operators and $\mathcal{K}(H)$ the space of all compact
operators from $H$ into $H$. Let $\mathcal{E}$ be a Banach space and $r$ a positive number. Then, by
$\mathcal{E}_r$ we denote the closed ball of centre $0$ and radius $r$. Let $e,f$ be elements of a
Hilbert space $H$. We denote by $e\otimes f$ the rank one operator on $H$ defined by
$(e\otimes f)(h)=\langle h,e\rangle f.$

Nest algebras form a class
of non-selfadjoint operator algebras that generalize the block upper triangular matrices to an infinite dimensional Hilbert
space context. They were introduced by Ringrose in \cite{ring} and since then, they have been studied by many authors. 
The monograph of Davidson \cite{dav}
is recommended as a reference.
A nest $\mathcal{N}$ is a totally
ordered family of closed subspaces of a Hilbert space $H$ containing $\{0\}$ and $H$, which is closed under intersection and
closed span. If $H$ is a Hilbert space and $\mathcal{N}$ a nest on $H$, then the nest algebra $\alg$ is the algebra of all
operators $T$ such that $T(N)\subseteq N$ for all $N\in\mathcal{N}$. If $(N_{\lambda})_{\lambda\in\Lambda}$ is a family of
subspaces of a Hilbert space, we denote by $\vee \{N_{\lambda}:\lambda\in\Lambda\}$ their closed linear span and by
$\wedge\{N_{\lambda}:\lambda\in\Lambda\}$ their intersection. If $\mathcal{N}$ is a nest and $N\in\mathcal{N}$, then
$N_-=\vee\{N^{\prime}\in\mathcal{N}:N^{\prime}<N\}$.
Similarly we define $N_+=\wedge\{N^{\prime}\in\mathcal{N}:N^{\prime}>N\}$. The subspaces $N\cap N_{-}^{\perp}$ are called
the \textit{atoms} of $\mathcal{N}$. For any $N\in\mathcal{N}$, we denote by $P_N$ the
orthonormal
projection corresponding to $N$. We endow $\mathcal{N}$ with the order topology and $\{P_N:N\in\mathcal{N}\}$ with the
strong operator topology and denote these spaces by $(\mathcal{N},<)$ and $(P_{\mathcal{N}},\sot)$ respectively. The
natural map taking $N$ to $P_N$ is an order preserving homeomorphism of the compact Hausdorff space $(\mathcal{N},<)$ onto
$(P_{\mathcal{N}},\sot)$, \cite[Theorem 2.13]{dav}. We shall identify the subspaces of a nest with the
corresponding orthogonal projections. In this paper we do not distinguish between these subspaces and projections. We shall
frequently use the fact that a rank one operator $e\otimes f$ belongs to a nest algebra, $\alg$, if and only if the exist an
element $N$ of $\mathcal{N}$ such that $e\in N_-^{\perp}$ and $f\in N$, \cite[Lemmas 2.8 and 3.7]{dav}.
Note that the nest algebras are WOT-closed
subalgebras of $\mathcal{B}(H)$ \cite[Proposition 2.2]{dav}. Throughout the paper we denote by $\mathcal{N}$ a nest acting
on a Hilbert space $H$ and by $\mathcal{K}(\mathcal{N})$ the ideal of compact operators of $\alg$.

\section{COMPACT MULTIPLICATION OPERATORS}

Let $H$ be a Hilbert space and $a$, $b$ elements of $\mathcal{B}(H)$. Vala
proved in \cite{1964} that if
 $a,b\in\mathcal{B}(H)-\{0\}$, then the map $\phi:\mathcal{B}(H)\rightarrow \mathcal{B}(H)$, $x\mapsto axb$
is compact if and only if the operators $a$ and
$b$ are both compact. However, such a result does not hold for nest algebras. Let $\mathcal{N}$ be a nest containing a
projection $P$ such that $\dim(P)=\dim(P^{\perp})=\infty$ and $a\in\alg$ be a
non-compact operator such that $a=PaP^{\perp}$. Then, the multiplication operator
\begin{eqnarray*}
 M_{a,a}:\alg &\rightarrow& \alg, \\
 x &\mapsto axa
\end{eqnarray*}
coincides with the multiplication operator $M_{0,0}$, since
\begin{equation*}
 M_{a,a}(x)=axa=PaP^{\perp}xPaP^{\perp}=0,
\end{equation*} 
for $P^{\perp}xP=0$.

Let $a,b\in\alg$. We introduce the following projections:
\begin{equation*}
 R_a=\vee \{P\in\mathcal{N}: aP=0\}
\end{equation*} 
and 
\begin{equation*}
 Q_b=\wedge \{P\in\mathcal{N}: P^{\perp} b=0\}.
\end{equation*}

\begin{proposition} \label{zero}
 Let $a,b\in\alg$. Then, $M_{a,b}=0$ if and only if $Q_b\leq R_a$.
\end{proposition}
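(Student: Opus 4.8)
The plan is to first pin down $R_a$ and $Q_b$ concretely. Since $\mathcal{N}$ is closed under closed span and intersection, both $R_a$ and $Q_b$ lie in $\mathcal{N}$. By construction $aR_a=0$ and $Q_b^{\perp}b=0$, and for $N\in\mathcal{N}$ one records the characterizations $aN=0\iff N\le R_a$ and $N^{\perp}b=0\iff N\ge Q_b$: the forward implications are immediate from the definitions of $R_a$ as a join and $Q_b$ as a meet, and the reverse ones follow from $aR_a=0$ and $Q_b^{\perp}b=0$ together with $N=R_aN$ respectively $\range b\subseteq\range Q_b$. Applied with $N_-$ in place of $N$, the second characterization also gives $N_-^{\perp}b\ne0\iff N_-<Q_b$.

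For the easy direction I would assume $Q_b\le R_a$. Then $Q_b=R_aQ_b$, so $aQ_b=(aR_a)Q_b=0$, while $b=Q_bb$. For any $x\in\alg$ the invariance of $\range Q_b$ under $x$ gives $xQ_b=Q_bxQ_b$, whence $axb=ax(Q_bb)=a(Q_bxQ_b)b=(aQ_b)(xQ_bb)=0$. Thus $M_{a,b}=0$.

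For the converse I would argue by contraposition, assuming $Q_b\not\le R_a$; since $\mathcal{N}$ is totally ordered this forces $R_a<Q_b$. The engine is a rank one computation: if $e\in N_-^{\perp}$ and $f\in N$ for some $N\in\mathcal{N}$, then $e\otimes f\in\alg$, and a direct calculation gives $a(e\otimes f)b=(b^{*}e)\otimes(af)$, which is nonzero precisely when $b^{*}e\ne0$ and $af\ne0$. So it suffices to produce a single $N\in\mathcal{N}$ with $R_a<N$ and $N_-<Q_b$: the first inequality gives $aN\ne0$, hence a vector $f\in N$ with $af\ne0$, and the second gives $N_-^{\perp}b\ne0$, so that, choosing $h$ with $N_-^{\perp}bh\ne0$ and setting $e=N_-^{\perp}bh$, we get $\langle bh,e\rangle=\|N_-^{\perp}bh\|^{2}\ne0$ and hence $b^{*}e\ne0$. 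This exhibits $x=e\otimes f\in\alg$ with $axb\ne0$, contradicting $M_{a,b}=0$.

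The main obstacle is exactly this simultaneous choice of $N$, which is sensitive to the local structure of the nest at $Q_b$, so I expect to split according to whether the nest jumps below $Q_b$. If $(Q_b)_-<Q_b$, then $N=Q_b$ works, since $Q_b>R_a$ and $(Q_b)_-<Q_b$. If instead $(Q_b)_-=Q_b$, I would first show there is $N\in\mathcal{N}$ with $R_a<N<Q_b$: otherwise every $N<Q_b$ would satisfy $N\le R_a$, whence $(Q_b)_-=\vee\{N\in\mathcal{N}:N<Q_b\}\le R_a$ and so $Q_b\le R_a$, contradicting $R_a<Q_b$; for such an $N$ one has $R_a<N$ and $N_-\le N<Q_b$, as required.
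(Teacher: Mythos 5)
Your proof is correct and follows essentially the same route as the paper: the same computation $aR_a=0$, $Q_bb=b$ for the easy direction, and the same rank-one engine $a(e\otimes f)b=(b^{*}e)\otimes(af)$ for the converse. The only cosmetic difference is that you split cases on whether $(Q_b)_-<Q_b$ while the paper splits on whether $R_{a+}=Q_b$; both partitions isolate the same jump phenomenon, and your unifying criterion ($R_a<N$ and $N_-<Q_b$) together with the explicit construction of $e=N_-^{\perp}bh$ is, if anything, slightly tidier than the paper's assertion that suitable unit vectors exist.
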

\begin{proof}
 We observe that if $Q_b\leq R_a$, then for all $x\in\alg$: 
\begin{eqnarray*}
 M_{a,b}(x) &=& axb\\
&=& aR_a^{\perp}xQ_bb\\
&=& aR_a^{\perp}Q_bxQ_bb\\
&=& 0,
\end{eqnarray*}
since $a=aR_a^{\perp}$, $b=Q_bb$ and $R_a^{\perp}Q_b=0$. 

Now, suppose that $R_a<Q_b$. We distinguish two cases:
\begin{enumerate}
 \item There exists a projection $P\in\mathcal{N}$ such that $R_a<P<Q_b$. Then, there exist two norm one vectors $e\in
P_-^{\perp}$ and $f\in P$ such that $a(f)\neq 0$ and $b^*(e)\neq 0$. It follows that, $M_{a,b}(e\otimes
f)=a(e\otimes f)b=b^*(e)\otimes a(f) \neq 0$.
 \item There is not any projection of the nest between $R_a$ and $Q_b$, i.e. $R_{a+}=Q_b$. Then, there exist two norm 
one
vectors $e\in
(R_{a+})^{\perp}_-=R_a^{\perp}$ and $f\in R_{a+}=Q_b$ such that $a(f)\neq 0$ and $b^*(e)\neq 0$. It follows that,
$M_{a,b}(e\otimes
f)=a(e\otimes f)b=b^*(e)\otimes a(f) \neq 0$.\qedhere

\end{enumerate}
\end{proof}

The next theorem gives a necessary and sufficient
condition for a non-zero multiplication operator $M_{a,b}:\alg\rightarrow\alg$,
$M_{a,b}(x)=axb$ to be compact.

\begin{theorem} \label{generalcase}
  Let $a,b\in\alg$ such that $M_{a,b}\neq 0$.
The multiplication operator $M_{a,b}:\alg\rightarrow\alg$ is compact if and only if the
operators
$P_+aP_+$ and $P_-^{\perp}bP_-^{\perp}$ are both compact for all
$P\in\mathcal{N}$, $R_a< P< Q_b$ in the case that $R_{a+}\neq Q_b$ or the operators $Q_b a Q_b$ and
$R_a^{\perp}bR_a^{\perp}$ are both compact in the case that $R_{a+}=Q_b$.
\end{theorem}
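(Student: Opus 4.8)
The plan is to reduce $M_{a,b}$ to the interval $[R_a,Q_b]$ and then separate the atomic situation (the case $R_{a+}=Q_b$) from the genuinely continuous one. First I would record the identity
\[
M_{a,b}(x)=axb=a\,E_0\,x\,E_0\,b,\qquad E_0=Q_b-R_a,
\]
valid for every $x\in\alg$: it follows from $a=aR_a^\perp$, $b=Q_bb$ and the fact that $x\in\alg$ preserves $R_a$ and $Q_b$, so that $R_a^\perp xQ_b=E_0xE_0$. Since the compression $x\mapsto E_0xE_0$ maps the unit ball of $\alg$ onto the unit ball of $E_0\alg E_0$, compactness of $M_{a,b}$ is equivalent to compactness of $y\mapsto (aE_0)\,y\,(E_0b)$ on $E_0\alg E_0$, and using $a,b\in\alg$ I would rewrite the symbols as $aE_0=Q_baQ_b$ and $E_0b=R_a^\perp bR_a^\perp$, matching the statement. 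When $R_{a+}=Q_b$ the projection $E_0$ is an atom, so $E_0\alg E_0=\mathcal B(E_0H)$ and the displayed map is an ordinary multiplication operator on a full algebra; Vala's theorem \cite{1964} then gives compactness iff both symbols are compact, settling that case.

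For necessity in the case $R_{a+}\neq Q_b$ I would test $M_{a,b}$ on the rank-one operators $e\otimes f\in\alg$ (those with $e\in N_-^\perp$, $f\in N$), using $M_{a,b}(e\otimes f)=(b^*e)\otimes(af)$. Fixing $e$ with $b^*e\neq0$ and letting $f$ run through the unit ball of $N$ shows that $\{af:\|f\|\le1,\ f\in N\}$ is relatively compact, i.e. $aP_N$ is compact; such an $e$ exists exactly when $N_-<Q_b$. Dually, fixing $f$ with $af\neq0$ and varying $e$ gives that $N_-^\perp b$ is compact whenever $N>R_a$. Taking $N=P_+$ and $N=P$ for interior $P$ then yields compactness of $P_+aP_+$ and $P_-^\perp bP_-^\perp$; the borderline subcase $P_+=Q_b$ must be treated separately, and this is exactly where the one-sided limits $P_\pm$ in the statement enter.

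Sufficiency in the case $R_{a+}\neq Q_b$ is the heart of the matter. The workhorse is the elementary fact that if $c$ is compact and $\sup_\lambda\|S_\lambda\|<\infty$ with $S_\lambda\to0$ strongly, then $cS_\lambda\to0$ and $S_\lambda c\to0$ in norm. Writing $aN=(aN_0)N$ for a fixed interior $N_0\ge N$ with $aN_0$ compact, this first upgrades the hypothesis "$P_+aP_+$ compact" to "$aN$ compact for every interior $N$", the only delicate points being left-limit projections (where one needs the norm, not merely strong, limit of the compacts $aP$, $P\uparrow N$) and endpoint atoms (where the hypotheses in fact force $a$ or $b$ to be globally compact). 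The same device gives $\|aN\|\to0$ as $N\downarrow R_a$ and $\|N^\perp b\|\to0$ as $N\uparrow Q_b$. Then, for $\varepsilon>0$, I would pick a finite partition $R_a=N_0<\cdots<N_n=Q_b$ and expand $axb=\sum_{i\le j}a\Delta_i\,x\,\Delta_j b$ with $\Delta_i=N_i-N_{i-1}$. For every interior index pair both symbols $a\Delta_i$ and $\Delta_j b$ are compact, so by Vala those finitely many terms define a compact map; the only survivors are the two corner maps $x\mapsto aN_1xN_1b$ and $x\mapsto aN_{n-1}^\perp xN_{n-1}^\perp b$, of norm at most $\|aN_1\|\,\|b\|$ and $\|a\|\,\|N_{n-1}^\perp b\|$. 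Shrinking the end blocks makes these $<\varepsilon$, so $M_{a,b}$ is a norm limit of compact maps, hence compact.

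The main obstacle I anticipate is precisely this last step: in contrast with the atomic case, no single finite partition presents $M_{a,b}$ as a finite sum of compact multiplication maps, since the two corner terms are never compact for a fixed partition. The crux is thus the uniform estimate that their norms tend to $0$ as the end blocks shrink, together with the careful bookkeeping at limit projections and at endpoint atoms that determines when global compactness of $a$ or $b$ is genuinely forced; this bookkeeping is exactly what fixes the precise $P_+$/$P_-$ form of the two alternatives in the statement.
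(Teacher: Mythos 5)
Your proposal is correct, and its skeleton --- rank-one testing for necessity, Vala's theorem plus a norm-approximation for sufficiency, with a dichotomy at each endpoint between atomic and limit behaviour --- is the same as the paper's; what differs is the organization of the sufficiency step. The paper fixes a single interior projection $P$ and splits $M_{a,b}=M_{P_+aP_+,b}+M_{aP_+^{\perp},P_-^{\perp}bP_-^{\perp}}$, then treats each summand by a two-case analysis at its endpoint: if $R_{a+}>R_a$ it rewrites the summand with both symbols compact and quotes Vala outright; if $R_{a+}=R_a$ it picks $S$ with $\|P_+aP_+S_-\|$ small and splits off a small-norm piece. You instead compress at the outset to $E_0=Q_b-R_a$ (the identity $M_{a,b}(x)=aE_0xE_0b$ and the surjectivity of the compression onto the unit ball of $E_0\,\alg\,E_0$ are both valid), dispose of the case $R_{a+}=Q_b$ in one stroke via $E_0\,\alg\,E_0=\mathcal{B}(E_0H)$ and Vala --- slicker than the paper's direct computation $M_{a,b}=M_{Q_baQ_bR_a^{\perp},Q_bR_a^{\perp}bR_a^{\perp}}$, and it handles necessity in that case without separate rank-one work --- and in the remaining case you use the finite upper-triangular block decomposition $\sum_{i\le j}M_{a\Delta_i,\Delta_jb}$, Vala on all but the corner blocks, and shrinking corners. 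Your bookkeeping checks out: the only possibly non-compact blocks are indeed $(1,1)$ and $(n,n)$ (note $\Delta_nb=N_{n-1}^{\perp}b$ is compact, so the blocks $(i,n)$ with $i<n$ are harmless), and when shrinking is impossible --- $R_{a+}>R_a$ at the bottom, or $Q_{b-}<Q_b$ at the top --- the hypothesis applied at $P=R_{a+}$, respectively $P=Q_{b-}$, yields compactness of $R_a^{\perp}bR_a^{\perp}=E_0b$, respectively $Q_baQ_b=aE_0$, which removes the corresponding corner; that is the correct reading of your phrase ``forces $a$ or $b$ to be globally compact'' (false for $a,b$ themselves, true for the reduced symbols). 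Two precision nits, neither of which is a gap: your ``elementary fact'' is misstated, since $S_\lambda\to0$ strongly gives $S_\lambda c\to0$ in norm for compact $c$, but $cS_\lambda\to0$ requires $S_\lambda^{*}\to0$ strongly, which holds in your application only because the $S_\lambda$ are self-adjoint differences of nest projections; and the ``borderline subcase $P_+=Q_b$'' in the necessity direction needs no separate treatment, since $(P_+)_-\le P<Q_b$ always supplies the test vector $e$, exactly as in the paper.
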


\begin{proof}
\hspace{-0.3em}Suppose that $M_{a,b}$ is a non-zero compact multiplication operator.\hspace{-0.15em} From Proposition 
\ref{zero}, it follows that $R_a<
Q_b$. Let
$R_{a+}\neq Q_b$. Then, for all
$P\in\mathcal{N}$ such that
$R_a<P<Q_b$, we see that $aP\neq 0$. Let $(e_n)_{n\in\mathbb{N}}\subseteq P_-^{\perp}$ be a bounded sequence and $f\in
P$ such that $a(f)\neq 0$. The sequence $(M_{a,b}(e_n\otimes f))_{n\in\mathbb{N}}=(b^*(e_n)\otimes
a(f))_{n\in\mathbb{N}}$ has a convergent subsequence and therefore the sequence $(b^*(e_n))_{n\in\mathbb{N}}$ has a
convergent subsequence as well. Thus, the operator $b^*
P_-^{\perp}$ is compact and equivalently the operator $P_-^{\perp}bP_-^{\perp}$ is compact. Notice that 
$(P_+)_-^{\perp}b\neq 0$ since $(P_+)_-\leq P<Q_b$. Let $(f_n)_{n\in\mathbb{N}}\subseteq
P_+$ be a bounded sequence and
$e\in(P_+)_-^{\perp}$ such that $b^*(e)\neq 0$. The sequence $(M_{a,b}(e\otimes
f_n))_{n\in\mathbb{N}}=(b^*(e)\otimes
a(f_n))_{n\in\mathbb{N}}$ has a convergent subsequence and therefore the sequence $(a(f_n))_{n\in\mathbb{N}}$ has a
convergent subsequence as well. Thus, the operator $aP_+=P_+aP_+$ is compact. Now, consider the case in which 
$R_{a+}=Q_b$.
We see that
$aQ_b\neq0$. Let $(e_n)_{n\in\mathbb{N}}\subseteq Q_{b-}^{\perp}=R_a^{\perp}$ be a bounded sequence and $f\in Q_b$ such 
that
$a(f)\neq 0$. The sequence $(M_{a,b}(e_n\otimes f))_{n\in\mathbb{N}}=(b^*(e_n)\otimes a(f))_{n\in\mathbb{N}}$ has a
convergent subsequence and therefore the sequence $(b^*(e_n))_{n\in\mathbb{N}}$ has a
convergent subsequence as well. Thus, the operator $b^*
R_a^{\perp}$ is compact and equivalently the operator $R_a^{\perp}bR_a^{\perp}$ is compact. Notice that
$R_a^{\perp}b\neq 0$. Let $(f_n)_{n\in\mathbb{N}}\subseteq Q_b$ be a bounded sequence and
$e\in Q_{b-}^{\perp}=R_a^{\perp}$ such that $b^*(e)\neq 0$. The sequence $(M_{a,b}(e\otimes
f_n))_{n\in\mathbb{N}}=(b^*(e)\otimes
a(f_n))_{n\in\mathbb{N}}$ has a convergent subsequence and therefore the sequence $(a(f_n))_{n\in\mathbb{N}}$ has a
convergent subsequence as well. Thus, the operator $aQ_b=Q_baQ_b$ is compact.

Now, we prove the opposite direction. First, we suppose that $R_{a+}\neq Q_b$ and for all
$P\in\mathcal{N}$ with $R_a<P<Q_b$, the operators
$P_+aP_+$ and $P_-^{\perp}bP_-^{\perp}$ are compact. The multiplication operator $M_{a,b}$ can be written as follows:
\begin{eqnarray*}
 M_{a,b}(x) &=& axb\\
&=& aP_+xb+aP_+^{\perp}xb\\
&=& aP_+xb+aP_+^{\perp}xP_-^{\perp}b+aP_+^{\perp}xP_-b\\
&=& aP_+xb+aP_+^{\perp}xP_-^{\perp}b+aP_+^{\perp}P_-xP_-b\\
&=& P_+aP_+xb+aP_+^{\perp}xP_-^{\perp}bP_-^{\perp}\\
&=& M_{P_+aP_+,b}(x)+M_{aP_+^{\perp},P_-^{\perp}bP_-^{\perp}}(x),
\end{eqnarray*}
since $aP_+^{\perp}P_-xP_-b=0$. We only show that the multiplication operator
$M_{P_+aP_+,b}$ is compact since the proof of the compactness of $M_{aP_+^{\perp},P_-^{\perp}bP_-^{\perp}}$ is similar.
We distinguish two cases:
\begin{enumerate}
 \item We suppose that $R_{a+}\neq R_a$. Let $S=R_{a+}>R_a$. Then, $R_a<S<Q_b$.
Observing that $S_-=R_a$ it follows that $P_+aP_+=P_+aP_+S_-^{\perp}$. For all $x\in\alg$ it follows that
\begin{eqnarray*}
 M_{P_+aP_+,b}(x) &=& P_+aP_+xb\\
&=& P_+aP_+S_-^{\perp}xb\\
&=& P_+aP_+xS_-^{\perp}bS_-^{\perp}\\
&=& M_{P_+aP_+,S_-^{\perp}bS_-^{\perp}}(x).
\end{eqnarray*}
Thus, the multiplication operator $M_{P_+aP_+,b}=M_{P_+aP_+,S_-^{\perp}bS_-^{\perp}}$ is compact since the operators
$P_+aP_+$
and
$S_-^{\perp}bS_-^{\perp}$ are both compact \cite[Theorem 3]{1964}.
\item
Now, we suppose that $R_{a+}=R_a$. Then, there exists a net $(S_i)_{i\in I}\subseteq\mathcal{N}$ which is SOT-convergent 
to
the
projection $R_a$ and for all $i\in I$ the inequality $R_a<S_i$ is satisfied \cite[Theorem 2.13]{dav}. The compactness of 
the
operator $P_+aP_+$ implies that the net
$(P_+aP_+S_i)_{i\in I}$ converges to zero \cite[Proposition 1.18]{dav}. It follows that for some $\varepsilon>0$, we can
choose a projection
$S\in\mathcal{N}$, with $R_a<S<Q_b$ so that $\|P_+aP_+S_-\|<\varepsilon/\|b\|$. We write the multiplication operator
$M_{P_+aP_+,b}$ as follows:
\begin{equation*}
 M_{P_+aP_+,b} = M_{P_+aP_+S_-^{\perp},b}+M_{P_+aP_+S_-,b}.
\end{equation*}
Given that $\|M_{P_+aP_+S_-,b}\|<\varepsilon$, it suffices to show that the multiplication operator
$M_{P_+aP_+S_-^{\perp},b}$
is compact. For all $x\in\alg$ we deduce that:
\begin{eqnarray*}
M_{P_+aP_+S_-^{\perp},b}(x) &=& P_+aP_+S_-^{\perp}xb\\
&=& P_+aP_+S_-^{\perp}xS_-^{\perp}bS_-^{\perp}\\
&=& M_{P_+aP_+S_-^{\perp},S_-^{\perp}bS_-^{\perp}}(x).
\end{eqnarray*}
Therefore, the multiplication operator $M_{P_+aP_+S_-^{\perp},S_-^{\perp}bS_-^{\perp}}$ is compact since the operators
$P_+aP_+$ and
$S_-^{\perp}bS_-^{\perp}$ are both compact.
\end{enumerate}

Finally, we consider the case where
$R_{a+}=Q_b$ and the operators $Q_b a Q_b$ and $R_a^{\perp}bR_a^{\perp}$ are both compact. Seeing that
$a=aR_a^{\perp}$ and $b=Q_bb$,
the multiplication operator $M_{a,b}$ can be written in the following form:
\begin{eqnarray*}
 M_{a,b}(x) &=& axb\\
&=& aR_a^{\perp}xQ_bb\\
&=& Q_baQ_bR_a^{\perp}xQ_bR_a^{\perp}bR_a^{\perp}\\
&=& M_{Q_baQ_bR_a^{\perp},Q_bR_a^{\perp}bR_a^{\perp}}(x).
\end{eqnarray*}
and therefore $M_{a,b}=M_{Q_baQ_bR_a^{\perp},Q_bR_a^{\perp}bR_a^{\perp}}$ is a compact multiplication operator as the
operators
$Q_baQ_b$ and $R_a^{\perp}bR_a^{\perp}$ are both compact.
\end{proof}

\begin{remmark}
Consider the nest $\mathcal{N}=\{\{0\},H\}$ and let
$a,b\in\alg=\mathcal{B}(H)$ with $a,b\neq0$. From
Theorem \ref{generalcase} it follows that the multiplication operator
$M_{a,b}:\mathcal{B}(H)\rightarrow \mathcal{B}(H)$ is compact if and only if the operators $a$
and
$b$ are both compact. In that case the result coincides with Vala's Theorem.
\end{remmark}

\begin{corollary}
Let $a,b\in \alg$ such that $M_{a,b}\neq 0$. Then, the multiplication operator $M_{a,b}:\alg\rightarrow \alg$
is compact if and only if the multiplication operator $M_{a,b}|_{\kn}:\kn\rightarrow\kn$ is compact.
\end{corollary}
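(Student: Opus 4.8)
The plan is to reduce everything to Theorem \ref{generalcase} by observing that its necessity conditions are detected using only rank one, hence compact, operators.

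For the forward implication, note that $\kn$ is an ideal of $\alg$, so $M_{a,b}(\kn)\subseteq\kn$, and since $\kn=\alg\cap\mathcal{K}(H)$ is closed in $\alg$, the restriction $M_{a,b}|_{\kn}$ is a well-defined bounded map of $\kn$ into itself. As the closed unit ball of $\kn$ sits inside that of $\alg$, compactness of $M_{a,b}$ on $\alg$ immediately forces $M_{a,b}|_{\kn}$ to be compact; this direction requires nothing beyond the definition of a compact operator.

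For the converse, I would assume $M_{a,b}|_{\kn}$ is compact and argue that the operator-theoretic conditions appearing in Theorem \ref{generalcase} already hold, whence $M_{a,b}$ is compact on all of $\alg$. The key observation is that in the ``only if'' part of the proof of Theorem \ref{generalcase}, the compactness of the corner operators $P_+aP_+$ and $P_-^{\perp}bP_-^{\perp}$ (respectively $Q_baQ_b$ and $R_a^{\perp}bR_a^{\perp}$ when $R_{a+}=Q_b$) is extracted solely by feeding $M_{a,b}$ the bounded sequences of rank one operators $e_n\otimes f$ and $e\otimes f_n$. Every such rank one operator lies in $\alg\cap\mathcal{K}(H)=\kn$, and the relevant sequences are bounded in $\kn$, so the very same extraction of convergent subsequences of $(b^*(e_n))_{n\in\mathbb{N}}$ and $(a(f_n))_{n\in\mathbb{N}}$ goes through verbatim using only the compactness of $M_{a,b}|_{\kn}$. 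Thus the corner conditions of Theorem \ref{generalcase} are satisfied.

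Having these conditions in hand, I would then invoke the ``if'' part of Theorem \ref{generalcase}, which uses only the compactness of these corners together with Vala's theorem to conclude that $M_{a,b}:\alg\rightarrow\alg$ is compact. The only point deserving care is the bookkeeping distinguishing the cases $R_{a+}\neq Q_b$ and $R_{a+}=Q_b$, but in each case the rank one operators used are already compact, so no genuinely new argument is needed. In effect, the corollary records that the necessity direction of Theorem \ref{generalcase} never used more than the restriction to $\kn$, while the sufficiency direction produces compactness on the whole algebra.
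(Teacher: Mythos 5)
Your proposal is correct and follows essentially the same route as the paper: the forward direction is immediate, and for the converse you observe, exactly as the authors do, that the necessity argument in Theorem \ref{generalcase} tests $M_{a,b}$ only on rank one operators lying in $\kn$, so compactness of $M_{a,b}|_{\kn}$ yields the corner conditions, and the sufficiency direction of that theorem then gives compactness on all of $\alg$.
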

\begin{proof}
 The forward direction is immediate. For the opposite direction we observe that the proof is the same as the proof of 
the
forward direction of Theorem \ref{generalcase}. Therefore, we
deduce that the compactness of $M_{a,b}|_{\kn}$ is equivalent with the assertions of Theorem
\ref{generalcase}.
\end{proof}

\begin{corollary}\label{compmult}
 Let $(P_n)_{n\in\mathbb{N}}$ be a sequence of finite rank projections that
increase to the identity and $\mathcal{N}$ the nest $\{P_n\}_{n=1}^{\infty}\cup\{\{0\},H\}$. Let $a,b\in
\alg$ such that $M_{a,b}:\alg\rightarrow \alg$ is a non-zero multiplication
operator.
Then, $b$ is a compact operator if and only if $M_{a,b}$ is a compact multiplication operator. The set of compact 
elements
of $\alg$ is the ideal $\kn$.
\end{corollary}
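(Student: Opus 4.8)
The plan is to derive everything from Theorem~\ref{generalcase} by exploiting the special structure of this nest: every element of $\mathcal{N}$ other than $\{0\}$ and $H$ is one of the finite rank projections $P_n$, so the \emph{only} infinite rank projection in $\mathcal{N}$ is $H$. I would first record the order-theoretic facts $(P_n)_+=P_{n+1}$, $(P_{n+1})_-=P_n$ and $0_+=P_1$, and note that $R_a$ and $Q_b$ both lie in $\mathcal{N}$ (it is closed under closed span and intersection). Since $a=aR_a^\perp$, the case $R_a=H$ would give $a=0$ and hence $M_{a,b}=0$, contrary to hypothesis; so $R_a$ is finite rank, and therefore so is its successor $R_{a+}$. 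Likewise $b=Q_bb$, so $Q_b\in\{P_1,P_2,\dots\}\cup\{H\}$, i.e. $Q_b$ is either finite rank or equal to $H$. The point I want to extract is that the $a$-side conditions in Theorem~\ref{generalcase} are \emph{automatic}: if $R_{a+}\neq Q_b$, then any $P$ with $R_a<P<Q_b$ is some $P_n$, whence $P_+aP_+=P_{n+1}aP_{n+1}$ is finite rank; and if $R_{a+}=Q_b$, then $Q_b=R_{a+}$ is finite rank, so $Q_baQ_b$ is finite rank. Thus compactness of $M_{a,b}$ is controlled entirely by the $b$-side compressions.

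For the implication \emph{$b$ compact $\Rightarrow M_{a,b}$ compact} I would simply verify the hypotheses of Theorem~\ref{generalcase}: the $a$-side conditions hold by the finite rank observation above, while the relevant operator $P_-^\perp bP_-^\perp$ (resp. $R_a^\perp bR_a^\perp$) is a compression of the compact operator $b$ and hence compact.

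For the converse, suppose $M_{a,b}$ is compact. If $Q_b$ is finite rank (in particular whenever $R_{a+}=Q_b$), then $b=Q_bb$ has finite rank range and so is compact; this is the easy branch. The substantive branch is $Q_b=H$, where necessarily $R_{a+}\neq Q_b$ and Theorem~\ref{generalcase} gives that $P_-^\perp bP_-^\perp$ is compact for every $P$ with $R_a<P<H$. Taking $P=P_{n+1}$ with $n$ large enough that $P_n>R_a$ yields that $P_n^\perp bP_n^\perp$ is compact, and I finish with a finite rank correction: decomposing $b=P_nbP_n+P_nbP_n^\perp+P_n^\perp bP_n+P_n^\perp bP_n^\perp$, the first three summands have range or initial space inside the finite dimensional $P_n$ and are finite rank, so $b$ is compact iff $P_n^\perp bP_n^\perp$ is. Hence $b$ is compact. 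The main obstacle is organizing this case analysis cleanly — tracking which of $R_a,R_{a+},Q_b$ are finite rank and matching them to the two branches of Theorem~\ref{generalcase} — together with the finite rank correction that upgrades compactness of the single compression $P_n^\perp bP_n^\perp$ to compactness of $b$.

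Finally, to identify the compact elements of $\alg$ with $\kn$, I would apply the equivalence just proved with $a=b=c$. If $c\in\kn$, then $c$ is compact, so Vala's theorem \cite{1964} makes $M_{c,c}$ compact on $\mathcal{B}(H)$ and hence its restriction to the closed subspace $\alg$ is compact; thus $c$ is a compact element. Conversely, if $c$ is a compact element, then either $M_{c,c}=0$, in which case Proposition~\ref{zero} gives $Q_c\leq R_c$ with $R_c$ finite rank (as $R_c=H$ forces $c=0$), so $c=Q_cc$ is finite rank and compact; or $M_{c,c}\neq0$, in which case the equivalence above (with $b=c$) shows $c$ is compact. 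In all cases $c\in\kn$, so the set of compact elements is exactly $\kn$.
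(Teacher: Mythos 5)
Your proposal is correct, and it is exactly the route the paper intends: the corollary is stated there without proof as an immediate consequence of Theorem~\ref{generalcase} (together with Proposition~\ref{zero} and Vala's theorem), and your case analysis — $R_a$ necessarily finite rank, the $a$-side compressions automatically finite rank, $Q_b$ finite rank versus $Q_b=H$ with the finite rank correction $b=P_nbP_n+P_nbP_n^{\perp}+P_n^{\perp}bP_n+P_n^{\perp}bP_n^{\perp}$ — is precisely the specialization the authors leave to the reader. The identification of the compact elements with $\mathcal{K}(\mathcal{N})$, via $M_{c,c}=0$ handled by Proposition~\ref{zero} and $M_{c,c}\neq 0$ handled by the equivalence with $a=b=c$, is likewise the intended argument and is carried out without gaps.
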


Let $\mathcal{A}$ be a C*-algebra and $\Phi$ an elementary operator on $\mathcal{A}$. Timoney proved in \cite[Theorem
3.1]{tim} that $\Phi$
is compact if and only if $\Phi$ can be expressed as $\Phi(x)=\sum_{i=1}^m a_ixb_i$ for $a_i$ and $b_i$ compact elements 
of
$\mathcal{A}$ ($1\leq i\leq m$).
The question that arises is whether a compact multiplication operator defined on a nest algebra can always be written as 
an
elementary operator with
compact symbols i.e., if $M_{a,b}:\alg\rightarrow\alg$ is a compact multiplication operator, then there exist an
$l\in\mathbb{N}$ and compact
operators $c_i,d_i\in
\mathcal{B}(H)$, $i\in\{1,\ldots,l\}$, (where $H$ is the underlying Hilbert space of the nest) such that
$M_{a,b}=\sum_{i=1}^l M_{c_i,d_i}$. Another question is whether a compact multiplication operator
$M_{a,b}:\alg\rightarrow\alg$ can be written as an elementary operator $\sum_{i=1}^l M_{c_i,d_i}$ such that the 
operators
$c_i,d_i\in\alg$  $i\in\{1,\ldots,l\}$ are compact elements of the nest algebra. The following
example shows that both questions have a negative answer.

\begin{example}
Let $H$ be a Hilbert space, $\{e_i\}_{i\in\mathbb{N}}$ an orthonormal sequence of $H$,
$\mathcal{N}=\{[\{e_i:i\in\mathbb{N},i\leq n\}]:n\in\mathbb{N}\}\cup\{\{0\},H\}$ and $b=\sum_{n\in\mathbb{N}}\frac{1}{n}
e_n\otimes e_n$ a compact operator of $\alg$.
Then, the multiplication operator $M_{I,b}$ is compact (Corollary \ref{compmult}).
We suppose that there exist compact operators $c_i,d_i\in\mathcal{B}(H)$, $i=1,\ldots,l$ such that
$M_{I,b}=\sum_{i=1}^{l} M_{c_i,d_i}$ and we shall arrive at a contradiction. We consider the following family of rank 
one
operators,
\begin{equation*}
 \left\{x_{r,s}\right\}_{\substack{r\in\mathbb{N}\\ s\in\mathbb{N}\cup\{0\}\\ s<r}}=\left\{e_r\otimes
e_{r-s}\right\}_{\substack{r\in\mathbb{N}\\ s\in\mathbb{N}\cup\{0\}\\
s<r}}\in\alg.
\end{equation*}
Then,
\begin{equation*}
 M_{I,b}(x_{r,s})=\sum_{i=1}^l M_{c_i,d_i}(x_{r,s})
\end{equation*}
i.e.,
\begin{equation*} 
\sum_{n\in\mathbb{N}}\frac{1}{n} e_n\otimes x_{r,s}(e_n)=\sum_{i=1}^l c_ix_{r,s}d_i
\end{equation*}
or
\begin{equation} \label{cex2}
\frac{1}{r}e_r\otimes e_{r-s}=\sum_{i=1}^l d_i^*(e_r)\otimes c_i(e_{r-s}).
\end{equation}
The relation (\ref{cex2}) implies that 
\begin{equation*}
\langle e_{r-s},\frac{1}{r} e_r\otimes e_{r-s}(e_r)\rangle=\sum_{i=1}^l \langle e_{r-s},d_i^*(e_r)\otimes
c_i(e_{r-s})(e_r)\rangle
\end{equation*}
or
\begin{equation} \label{nordereqn}
 \frac{1}{r}=\sum_{i=1}^l \langle e_r,d_i^*(e_r)\rangle\langle e_{r-s},c_i(e_{r-s})\rangle.
\end{equation}
For all $r\in\mathbb{N}$ and $i\in\{1,\ldots,l\}$ we set $D_{r,i}=\langle e_r,d_i^*(e_r)\rangle$ and $C_{r,i}=\langle
e_{r},c_i(e_{r})\rangle$. We denote the vectors $(D_{r,1},\ldots,D_{r,l})\in\mathbb{C}^l$ and
$(C_{r,1},\ldots,C_{r,l})\in\mathbb{C}^l$ by $D_r$ and $C_r$ respectively for all $r\in\mathbb{N}$. Now, we can write
equation (\ref{nordereqn}) in the form 
\begin{equation} \label{afform}
 \frac{1}{r}=\sum_{i=1}^l D_{r,i}C_{r-s,i}.
\end{equation}
This implies
\begin{equation} \label{newform}
 0=\sum_{i=1}^l D_{r,i}\left(C_{r-s,i}-C_{1,i}\right)
\end{equation}
 The sequence $(\mathcal{V}_n)_{n\in\mathbb{N}}=\left(\span\{C_2-C_1,\ldots,
C_n-C_1\}\right)_{n\in\mathbb{N}}$ of
subspaces
of $\mathbb{C}^l$ is increasing and therefore there exists an $n_0\in\mathbb{N}$ such that 
$\mathcal{V}_{n_0}=\mathcal{V}_n$
for all $n\geq n_0$. Therefore, the following holds for all $n\in\mathbb{N}$.
\begin{equation}\label{affine2}
  0=\sum_{i=1}^l D_{n_0,i}(C_{n,i}-C_{1,i}).
\end{equation}
Since the operators $c_i$, $i=1,\ldots,l$ are compact, the sequence $(C_{n})_{n\in\mathbb{N}}$ converges to $0$.
Taking limits in equation (\ref{affine2}) as $n\rightarrow
\infty$ we obtain $0=-\frac{1}{n_0}$ which is a contradiction.
\end{example}

\section{THE IDEAL GENERATED BY THE COMPACT ELEMENTS}

The set of compact elements of a nest algebra does not form an ideal in general. Let $\mathcal{N}$ be a continuous nest 
and
$P,Q\in\mathcal{N}-\{0,I\}$. Then, from Proposition \ref{zero} we can easily see that there exist non-compact operators 
but
compact elements $a,b$ of $\alg$ such that $a=PaP^{\perp}$, $b=QbQ^{\perp}$, while $M_{a+b,a+b}$ is non-compact.

The next proposition characterizes the nests for which the compact elements form an ideal.

\begin{proposition}
 The set of compact elements of $\alg$ is an ideal if and only if for all
$P,S\in\mathcal{N}-\{0,I\}$, with $P<S$, the dimension of $S-P$ is finite. In that case, the set of compact elements of
$\alg$ is the ideal $\kn+Q\alg Q^{\perp}$, for some $Q\in\mathcal{N}-\{0,I\}$.
\end{proposition}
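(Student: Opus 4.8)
The plan is to read everything off the two criteria already proved: the vanishing criterion of Proposition~\ref{zero} and the compactness criterion of Theorem~\ref{generalcase}. I would begin by recording the two basic families of compact elements. Every $k\in\kn$ is a compact element, because $M_{k,k}\colon\alg\to\alg$ is the restriction of the compact operator $M_{k,k}$ on $\mathcal{B}(H)$ (Vala's theorem). Next, for any $Q\in\mathcal{N}$ and any $q=QqQ^{\perp}$ one has $Q_{q}\le Q\le R_{q}$ (the range of $q$ lies in $Q$ and $q$ annihilates $Q$), so $M_{q,q}=0$ by Proposition~\ref{zero}; thus every element of $Q\alg Q^{\perp}$ is a compact element. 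Finally, since $x(Q)\subseteq Q$ forces $Q^{\perp}xQ=0$ for $x\in\alg$, multiplying $QyQ^{\perp}$ by an arbitrary $a\in\alg$ on either side stays in $Q\alg Q^{\perp}$; hence $Q\alg Q^{\perp}$, and therefore $\kn+Q\alg Q^{\perp}$, is a two-sided ideal of $\alg$.

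For the implication from the dimensional hypothesis to the ideal statement I would first convert the hypothesis into structural information about $\mathcal{N}$, the crucial output being a distinguished interior projection $Q$ across which all the infinite-dimensional behaviour of $\alg$ is concentrated, and the observation that any two admissible choices of $Q$ differ by a finite-dimensional gap, so that $\kn+Q\alg Q^{\perp}$ does not depend on the choice. I would then identify the compact elements with this ideal. For the inclusion of the ideal into the compact elements, beyond the first paragraph it suffices to check that $k+q$ with $k\in\kn$ and $q\in Q\alg Q^{\perp}$ is a compact element; I would feed $k+q$ into Theorem~\ref{generalcase} and use the finiteness of the interior gaps to force each compression $P_{+}(k+q)P_{+}$ and $P_{-}^{\perp}(k+q)P_{-}^{\perp}$ to be compact. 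For the reverse inclusion I would take a compact element $a$ and run Theorem~\ref{generalcase} backwards: its compactness conditions say exactly that every block of $a$ is compact except possibly a single high-to-low jump across $Q$, which is an element of $Q\alg Q^{\perp}$, so that $a\in\kn+Q\alg Q^{\perp}$.

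For the converse I would argue by contraposition. A failure of the hypothesis should, after a short structural step, furnish an interior projection $Q$ with $\dim Q=\dim Q^{\perp}=\infty$, so that $Q\alg Q^{\perp}$ contains a non-compact $q=QqQ^{\perp}$. This $q$ is a compact element by Proposition~\ref{zero}, and I would then choose a rank-one $k\in\kn$ adapted to $q$ and show that $k+q$ is \emph{not} a compact element: since $M_{q,q}=0$, one has $M_{k+q,k+q}=M_{k,k}+M_{k,q}+M_{q,k}$, and applying Theorem~\ref{generalcase} the surviving cross term $M_{k,q}$ is non-compact, its value on the unit ball ranging over a non-precompact family of rank-one operators. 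Hence the compact elements are not closed under addition and cannot form an ideal, exactly as in the Remark.

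The step I expect to be the main obstacle is the structural one in the forward direction, namely extracting the single projection $Q$ from the dimensional data and proving, block by block via Theorem~\ref{generalcase}, that a compact element can carry at most one non-compact high-to-low jump. The delicacy is that a compact element need be neither of the two ``pure'' types of the first paragraph: it is in general a genuine mixture, and one must separate its diagonal behaviour on the finite interior gaps (automatically compact) from its jump behaviour, while being careful that the feature actually detected by $M_{a,a}$ is the presence of an interior projection that is simultaneously infinite- and co-infinite-dimensional. Pinning down precisely how the dimension hypothesis controls that feature is where the real work lies.
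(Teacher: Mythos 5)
Your positive ingredients, and your route to the containment of the compact elements in $\kn+Q\alg Q^{\perp}$, agree with the paper's: it, too, extracts from Theorem~\ref{generalcase} a single $R\in\mathcal{N}-\{0,I\}$ with $RaR$ and $R^{\perp}aR^{\perp}$ compact, uses the finite interior gaps to replace $R$ by an arbitrary interior $Q$, and then splits $a=QaQ+QaQ^{\perp}+Q^{\perp}aQ^{\perp}$. The genuine gap is in your contraposition. The ``short structural step'' --- passing from interior $P<S$ with $\dim(S-P)=\infty$ to an interior $Q$ with $\dim Q=\dim Q^{\perp}=\infty$ --- is false: for $\mathcal{N}=\{0,P,S,H\}$ with $\dim P=\dim S^{\perp}=1$ and $\dim(S-P)=\infty$ the hypothesis fails, yet the only interior projections are $P$ and $S$, and both $P\alg P^{\perp}$ and $S\alg S^{\perp}$ consist of operators of rank at most one, so there is no non-compact $q=QqQ^{\perp}$ and your construction has no input. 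The paper argues differently: it takes compact elements $a,b$ with $R_a<Q_a\leq P<S\leq R_b<Q_b$ and $S_{+}aS_{+}$ non-compact, notes $R_{a+b}=R_a<S<Q_b=Q_{a+b}$, and lets Theorem~\ref{generalcase} detect the non-compact compression $S_{+}(a+b)S_{+}$ of $a+b$. Observe, however, that this construction needs $\dim Q_a=\infty$ for some interior $Q_a\leq P$, which the mere failure of the gap condition does not supply; in the $(1,\infty,1)$ nest just described one checks from Theorem~\ref{generalcase} that the compact elements are exactly $\kn$, which \emph{is} an ideal. So your closing paragraph is prescient: the dimension bookkeeping is not merely the hard part, it is a point where the bare gap condition is insufficient --- for your argument and for the paper's alike.

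The other step you flag --- showing $k+q$, with $k\in\kn$ and $q=QqQ^{\perp}$, is a compact element by letting the finite interior gaps force every tested compression to be compact --- also cannot be carried out as stated. The gaps control only compressions between interior projections, whereas the criterion of Theorem~\ref{generalcase} also tests end compressions (a $P$ with $P_{+}=H$, or the jump case with $Q_{k+q}=H$), where it demands compactness of $k+q$ itself. Concretely, let $\mathcal{N}=\{0,Q,H\}$ with $\dim Q=\dim Q^{\perp}=\infty$ (the gap hypothesis holds vacuously), let $q=QqQ^{\perp}$ be isometric on $Q^{\perp}$, and let $k=e\otimes f\in\alg$ with unit vectors $e,f\in Q^{\perp}$. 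Then $R_{k+q}=Q$ and $Q_{k+q}=H=Q_{+}$, so the jump case applies and requires $H(k+q)H=k+q$ to be a compact operator, which it is not; equivalently, $M_{k+q,k+q}(x)=kxk+e\otimes(qxf)$ has non-precompact image on the unit ball (take $x=f\otimes g$ with $g$ running through an orthonormal basis of $Q^{\perp}\ominus\mathbb{C}e$). Thus the inclusion $\kn+Q\alg Q^{\perp}\subseteq\{\textrm{compact elements}\}$ does not follow from the finite-gap hypothesis alone, and you should know that the paper never verifies it either: after proving that every compact element lies in $\kn+Q\alg Q^{\perp}$ it simply asserts equality. What your plan runs into here is a soft spot of the proposition itself at the extreme ends of the nest, not a routine verification you happened to omit.
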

\begin{proof}
 Suppose that there exist $P,S\in\mathcal{N}-\{0,I\}$, with $P<S$, and $\dim (S-P)=\infty$. Let $a,b$ compact elements 
of
$\alg$ such that $R_a<Q_a\leq P<S\leq R_b<S_b$ and the operator $S_+aS_+$ is not compact. We observe that $R_{a+b}=R_a$ 
and
$Q_{a+b}=Q_b$. The operator $S_+bS_+$ is compact while the operator $S_+aS_+$ is not compact. It follows that the 
operator
$S_+(a+b)S_+$ is not compact and therefore the element $a+b$ is non-compact since $R_{a+b}<S<Q_{a+b}$ (Theorem
\ref{generalcase}). Thus, the set of compact elements of $\alg$ is not an ideal.

Now, suppose that for all $P,S\in\mathcal{N}$, with $P<S$, the dimension of $S-P$ is finite. Let $a$ be a compact 
element of
$\alg$. Then, then exists a projection $R\in\mathcal{N}-\{0,I\}$ such that the operators $RaR$ and $R^{\perp}aR^{\perp}$ 
are
compact from Theorem \ref{generalcase}. Let $Q\in\mathcal{N}-\{0,I\}$, with $Q>R$. Then, the operator
$QaQ=aR+a(Q-R)$ is compact
since $\dim(Q-R)<\infty$ and the operator $aR=RaR$ is compact. Similarly, we observe that the operator
$Q^{\perp}aQ^{\perp}$ is compact since $Q>R$ and the operator $R^{\perp}a=R^{\perp}aR^{\perp}$ is compact. If $Q<R$, it 
is
immediate that the operator $QaQ$ is compact. The operator $Q^{\perp}aQ^{\perp}=R^{\perp}a+(Q^{\perp}-R^{\perp})a$ is 
compact
as well since $\dim(R^{\perp}-Q^{\perp})=\dim(Q-R)<\infty$. It follows that the set of compact elements of $\alg$ is
the ideal $\kn+Q\alg Q^{\perp}$, for some $Q\in\mathcal{N}-\{0,I\}$.
\end{proof}

As we have seen, the set of compact elements of $\alg$ does not form an ideal. However, the norm closed ideal generated 
by
the compact elements of $\alg$ has a nice description.
Let $\mathbf{A}$ be the set of atoms of $\mathcal{N}$. The map $\Delta_{\mathcal{N}}:\alg\rightarrow\alg$,
$x\mapsto\sum_{A_{\alpha}\in\mathbf{A}} A_{\alpha}xA_{\alpha}$ is a projection to the atomic part of the diagonal of
$\alg$. The Jacobson radical of $\alg$ is denoted by $\rad$.

\begin{theorem}
 The ideal $\mathcal{J}_c$, generated by the compact elements of the nest algebra $\alg$, is equal to $\kn+\rad$.
\end{theorem}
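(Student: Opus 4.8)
The plan is to prove the two inclusions $\kn+\rad\subseteq\mathcal{J}_c$ and $\mathcal{J}_c\subseteq\kn+\rad$ separately. Since $\mathcal{J}_c$ is by definition the smallest \emph{closed} ideal of $\alg$ containing all compact elements, the first inclusion reduces to checking that every compact operator and every element of $\rad$ lies in $\mathcal{J}_c$, while the second reduces to showing that $\kn+\rad$ is itself a closed ideal containing all the compact elements. Two ingredients organize everything. The first is Ringrose's characterization of the radical: $T\in\alg$ belongs to $\rad$ iff for every $\varepsilon>0$ there is a finite partition $0=N_0<N_1<\cdots<N_n=I$ in $\mathcal{N}$ with $\|E_iTE_i\|<\varepsilon$ for all $i$, where $E_i=N_i-N_{i-1}$ (see \cite{dav}). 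The second is the elementary observation that for every $P\in\mathcal{N}$ and every $z\in\mathcal{B}(H)$ the ``cross-cut'' operator $PzP^{\perp}$ lies in $\alg$ and satisfies $Q_{PzP^{\perp}}\leq P\leq R_{PzP^{\perp}}$, so that $M_{PzP^{\perp},PzP^{\perp}}=0$ by Proposition \ref{zero}; thus every such operator is simultaneously a compact element of $\alg$ and, by choosing a partition through $P$, a member of $\rad$.

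For $\kn+\rad\subseteq\mathcal{J}_c$ I first note that a compact operator $a$ is a compact element by Vala's theorem \cite{1964} (the restriction of the compact map $M_{a,a}$ on $\mathcal{B}(H)$ to $\alg$ is again compact), so $\kn\subseteq\mathcal{J}_c$. Given $T\in\rad$ and $\varepsilon>0$, I take a partition as above and split $T=\sum_i E_iTE_i+\sum_{i<j}E_iTE_j$. The diagonal part is block-diagonal, hence of norm $\max_i\|E_iTE_i\|<\varepsilon$, while each off-diagonal term with $i<j$ satisfies $E_iTE_j=N_{j-1}(E_iTE_j)N_{j-1}^{\perp}$ and is therefore a cross-cut operator, hence a compact element lying in $\mathcal{J}_c$. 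Thus $T$ is within $\varepsilon$ of $\mathcal{J}_c$, and closedness of $\mathcal{J}_c$ gives $T\in\mathcal{J}_c$.

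For $\mathcal{J}_c\subseteq\kn+\rad$ I show that $\kn+\rad$ is a closed ideal containing every compact element. Let $a$ be a compact element. Writing $a=Q_aaR_a^{\perp}$ and using $Q_a=R_a+K$, $R_a^{\perp}=K+Q_a^{\perp}$ with $K=Q_a-R_a$, I peel off the components $R_aaK$, $R_aaQ_a^{\perp}$ and $KaQ_a^{\perp}$, each of which is a cross-cut operator at $R_a$ or at $Q_a$ and hence lies in $\rad$; modulo $\rad$ only the core block $KaK$ survives. On the core I apply Theorem \ref{generalcase}: when $R_{a+}=Q_a$ the core is the single block $KaK$, which is compact because $Q_aaQ_a$ is; when $R_{a+}\neq Q_a$ the compactness of $P_+aP_+$ and $P_-^{\perp}aP_-^{\perp}$ for all interior $P$ forces the atomic diagonal blocks $A_\beta aA_\beta$ to be compact with $\|A_\beta aA_\beta\|\to0$. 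I collect the finitely many blocks of norm $\geq\varepsilon$ into one compact operator, and make the remaining diagonal blocks uniformly smaller than $\varepsilon$ by refining the partition; on the non-atomic part this uses that each compression is a fixed compact operator together with the continuity (absence of jumps) of the spectral measures $t\mapsto\|P_tu\|^{2}$, which lets one choose blocks carrying little mass. Ringrose's characterization then places the remainder in $\rad$, giving $a\in\kn+\rad$.

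The main obstacle is exactly this last decomposition, together with the closedness of $\kn+\rad$. Closedness I expect to obtain by passing to the quotient: $\alg/\rad$ is a C*-algebra, and under the quotient map $\pi$ the image $\pi(\kn)$ is the $c_0$-direct sum $\bigoplus_\alpha\mathcal{K}(A_\alpha)$ of the atomic compressions, which is closed, so that $\kn+\rad=\pi^{-1}\bigl(\pi(\kn)\bigr)$ is closed. The genuinely delicate point is controlling the infinite family of diagonal blocks in the core: one must deduce, purely from the compactness of all the corner compressions supplied by Theorem \ref{generalcase}, that the \emph{continuous} part of the diagonal of $a$ vanishes, so that the off-diagonal-plus-continuous remainder really falls into the radical rather than merely having small atomic blocks.
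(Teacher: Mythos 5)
Your first inclusion, $\kn+\rad\subseteq\mathcal{J}_c$, is essentially the paper's own argument: compact operators are compact elements, and for $T\in\rad$ Ringrose's criterion yields a finite partition with small diagonal, while the off-diagonal part is a sum of cross-cut operators $PzP^{\perp}$, which are compact elements by Proposition \ref{zero}. (The paper merely groups the off-diagonal blocks row-wise as $P_iP_{i-1}^{\perp}TP_i^{\perp}$ instead of block-wise as $E_iTE_j$; this is immaterial.) That half is correct.

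The converse inclusion is where you have a genuine gap, and you say so yourself: in the case $R_{a+}\neq Q_a$ you reduce to the core $KaK$ and then only sketch a program (atomic blocks $A_{\beta}aA_{\beta}$ compact with norms tending to zero, collecting large blocks, refining partitions using continuity of $t\mapsto\|P_tu\|^2$), ending with the admission that one ``must deduce'' that the continuous part of the diagonal vanishes --- i.e.\ the decisive step is not proved. Moreover the whole program is unnecessary, because Theorem \ref{generalcase} hands you the conclusion in one line, and this is exactly what the paper does: fix any \emph{single} $P\in\mathcal{N}$ with $R_a<P<Q_a$. Then $PaP=P(P_+aP_+)P$ and $P^{\perp}aP^{\perp}=P^{\perp}(P_-^{\perp}aP_-^{\perp})P^{\perp}$ are compact (since $P\leq P_+$ and $P^{\perp}\leq P_-^{\perp}$), and since $a\in\alg$ forces $P^{\perp}aP=0$ we get
\[
a=PaP+P^{\perp}aP^{\perp}+PaP^{\perp}\in\kn+\rad,
\]
the cross-cut $PaP^{\perp}$ lying in $\rad$ as you already noted. (When $R_{a+}=Q_a$, write $a=Q_aaQ_a+Q_aaQ_a^{\perp}$, which you essentially have; when $M_{a,a}=0$, Proposition \ref{zero} gives $a=R_aaR_a^{\perp}\in\rad$ outright.) There is no need to analyze the infinite family of diagonal blocks at all. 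A second, smaller defect is your closedness argument for $\kn+\rad$: the assertion that $\alg/\rad$ is a C*-algebra is unjustified (no involution is available on the quotient, and the claim is not established anywhere), and even granting the algebraic identification of $\pi(\kn)$ with $\bigoplus_{\alpha}\mathcal{K}(A_{\alpha})$ you still must show the image is closed, i.e.\ that the induced map is bounded below. This can be repaired without any C*-structure: $\Delta_{\mathcal{N}}$ is contractive and annihilates $\rad$ (each atom lies inside a single block of every finite partition), and $K-\Delta_{\mathcal{N}}(K)\in\rad$ for compact $K$ by \cite[Corollary 6.9]{dav}, whence $\|\pi(K)\|=\|\Delta_{\mathcal{N}}(K)\|$ and $\pi(\kn)$ is isometric to a complete space, hence closed. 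Alternatively, simply cite \cite[Theorem 11.6]{dav} for the closedness of $\kn+\rad$, as the paper does.
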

\begin{proof}
From \cite[Theorem 11.6]{dav} it follows that the set $\mathcal{K}(\mathcal{N})+\rad$ is a closed
ideal.

 Let $a$ be a compact element of $\alg$. From Proposition \ref{zero} and Theorem \ref{generalcase} it follows that $a$ 
is the
sum of a
compact operator and an operator of the form $PaP^{\perp}$ for some $P\in\mathcal{N}$. Therefore, the operator $a$ 
belongs to
the set $\kn+\rad$.

Now, we prove that $\kn+\rad\subseteq\mathcal{J}_c$. It suffices to show that $\rad$ $\subseteq \mathcal{J}_c$, since 
the
compact operators of $\alg$ are compact elements of the nest algebra. Let $a$ be an element of
$\rad$ and $\varepsilon$ a strictly positive number. Then, there is a finite subnest
$\mathcal{F}=\{0=P_1<P_2<\cdots<P_n=I\}$ of $\mathcal{N}$ such that $\|\Delta_{\mathcal{F}}(a)\|<\varepsilon$ 
\cite[Theorem
6.7]{dav}. We thus have:
\begin{equation*}
  \Delta_{\mathcal{F}}(a)+\sum_{i=2}^n P_iP_{i-1}^{\perp}aP_i^{\perp}= a.
\end{equation*}
It follows that $a$ can be written as a sum of compact elements of $\alg$, $P_iP_{i-1}^{\perp}aP_i^{\perp}$, and
an operator $\Delta_{\mathcal{F}}(a)$ of norm less than $\varepsilon$. Therefore, $a\in\mathcal{J}_c$.
\end{proof}

\begin{corollary}
 If $\mathcal{N}$ is a continuous nest, then $\mathcal{J}_c=\rad$.
\end{corollary}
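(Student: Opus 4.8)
The plan is to combine the preceding theorem, which gives $\mathcal{J}_c=\kn+\rad$, with the trivial observation that $\rad\subseteq\mathcal{J}_c$ always holds (since $\rad\subseteq\kn+\rad$). Thus the corollary reduces to the single inclusion $\kn\subseteq\rad$ under the hypothesis that $\mathcal{N}$ is continuous: once this is established, $\mathcal{J}_c=\kn+\rad\subseteq\rad\subseteq\mathcal{J}_c$ forces equality.

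To prove $\kn\subseteq\rad$ I would use the radical characterization already invoked in the previous proof \cite[Theorem 6.7]{dav}: an element $a\in\alg$ lies in $\rad$ precisely when for every $\varepsilon>0$ there is a finite subnest $\mathcal{F}=\{0=P_1<\cdots<P_n=I\}$ with $\|\Delta_{\mathcal{F}}(a)\|<\varepsilon$. Since $\rad$ is norm closed and the finite rank operators of $\alg$ are norm dense in $\kn$ \cite{dav}, it suffices to verify the criterion for a rank one operator $e\otimes f\in\alg$ and then to pass to finite rank operators by taking a common refinement of the relevant subnests. Writing $E_i=P_i-P_{i-1}$, a direct computation gives $\Delta_{\mathcal{F}}(e\otimes f)=\sum_{i}(E_ie)\otimes(E_if)$; as the summands are rank one operators acting on the mutually orthogonal subspaces $E_iH$, the norm of this sum equals $\max_i\|E_ie\|\,\|E_if\|$.

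The crux is to make $\max_i\|E_ie\|$ small by refining $\mathcal{F}$, and this is exactly where continuity of the nest enters. Because $\mathcal{N}$ is continuous, the natural map $(\mathcal{N},<)\to(P_{\mathcal{N}},\sot)$ is continuous, so the bounded monotone function $P\mapsto\|Pe\|^2$ is continuous on the compact Hausdorff space $(\mathcal{N},<)$, hence uniformly continuous. Consequently, given $\varepsilon>0$ one can choose the subnest fine enough that $\|E_ie\|^2=\|P_ie\|^2-\|P_{i-1}e\|^2<\varepsilon^2$ for every $i$, whence $\|\Delta_{\mathcal{F}}(e\otimes f)\|\le\varepsilon\|f\|$. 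I expect the main obstacle to be precisely this uniform continuity step: it is where atomlessness is genuinely used (for a nest with an atom the corresponding jump in $\|Pe\|^2$ cannot be smoothed away), and some bookkeeping is needed to produce a single finite subnest that simultaneously controls the finitely many vectors occurring in a given finite rank operator, before finally invoking the density of the finite rank operators in $\kn$ together with the norm closedness of $\rad$.
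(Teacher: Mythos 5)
Your proposal is correct, and it reaches the corollary by a more self-contained route than the paper. Both arguments reduce, via the preceding theorem $\mathcal{J}_c=\kn+\rad$, to the single inclusion $\kn\subseteq\rad$; but where the paper disposes of this in one line by citing \cite[Corollary 6.9]{dav} (a compact $c\in\alg$ lies in $\rad$ if and only if $\Delta_{\mathcal{N}}(c)=0$, and a continuous nest has no atoms, so $\Delta_{\mathcal{N}}$ vanishes identically), you in effect reprove that corollary from the Ringrose criterion \cite[Theorem 6.7]{dav}. Your ingredients are all sound: the computation $\Delta_{\mathcal{F}}(e\otimes f)=\sum_i(E_ie)\otimes(E_if)$ with norm $\max_i\|E_ie\|\,\|E_if\|$, the decomposition of finite rank operators of $\alg$ into rank ones in $\alg$ together with Erd\H{o}s' density theorem, and norm closedness of the radical; for the common-refinement step one should note that $\Delta_{\mathcal{F}'}=\Delta_{\mathcal{F}'}\circ\Delta_{\mathcal{F}}$ whenever $\mathcal{F}'$ refines $\mathcal{F}$ and each $\Delta_{\mathcal{F}'}$ is contractive, so refining can only decrease the relevant norms. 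One step of your sketch is mislabeled, although you have already put your finger on the right hypothesis: uniform continuity of $g(P)=\|Pe\|^2$ is not what produces a fine subnest, since by \cite[Theorem 2.13]{dav} that function is continuous, hence uniformly continuous, on the compact space $(\mathcal{N},<)$ for \emph{every} nest, while for an atomic nest no fine subnest exists. What continuity of $\mathcal{N}$ actually buys is that the order on $\mathcal{N}$ is dense (a gap $M<N$ with nothing between would give $N_-=M\neq N$) as well as complete, so $(\mathcal{N},<)$ is a linear continuum and the monotone continuous function $g$ attains every value in $[0,\|e\|^2]$; choosing projections $P_k$ with $g(P_k)=k\delta$ (these automatically form a chain, by monotonicity of $g$) and adjoining $0$ and $I$ yields a finite subnest whose increments satisfy $\|E_ke\|^2\leq\delta$. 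With that substitution your argument is complete; its payoff is an explicit, elementary proof of the key inclusion $\kn\subseteq\rad$, at the cost of redoing material that Davidson's book already packages as Corollary 6.9 and that the paper simply quotes.
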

\begin{proof}
 It is immediate from the fact that a compact operator $c\in\alg$ belongs to the radical if and only if
$\Delta_{\mathcal{N}}(c)=0$
\cite[Corollary 6.9]{dav}.
\end{proof}

\section{WEAKLY COMPACT MULTIPLICATION OPERATORS}

Akemann and Wright give a characterization of certain weakly compact maps on $\mathcal{B}(H)$ in \cite[Proposition
2.1]{ake}. We adjust that result to the case of nest algebras.

\begin{proposition} \label{wcmap}
 Let $\varphi:\alg\rightarrow \alg$ be a bounded linear map which is w*-continuous and maps $\mathcal{K}(\mathcal{N})$ 
into
$\mathcal{K}(\mathcal{N})$. Then,
$\varphi=(\varphi|_{\mathcal{K}(\mathcal{N})})^{**}$ and
$\varphi$ is weakly compact if and only if $\varphi(\alg)\subseteq\mathcal{K}(\mathcal{N})$.
\end{proposition}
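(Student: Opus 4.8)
The plan is to reduce the whole statement to Gantmacher's theorem, after first identifying $\varphi$ as the second adjoint of its restriction $\psi:=\varphi|_{\kn}$. The structural input I would invoke is that the bidual $\kn^{**}$ is canonically isometrically isomorphic to $\alg$: since $\kn$ is w*-dense in $\alg$ (the nest algebra is the w*-closure of its compact operators, as recorded in Davidson's monograph), the preannihilators of $\kn$ and of $\alg$ in the trace class $\mathcal{T}(H)$ coincide, so $\kn^{*}$ is naturally identified with the predual $\alg_{*}$ of $\alg$, whence $\kn^{**}=(\alg_{*})^{*}=\alg$. Under this identification the weak* topology of $\kn^{**}$ is exactly the w*-topology of $\alg$ appearing in the hypothesis, and the canonical embedding of $\kn$ into $\kn^{**}$ becomes the inclusion $\kn\subseteq\alg$.

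With this in hand, the first assertion $\varphi=\psi^{**}$ follows by a density argument. The biadjoint $\psi^{**}:\kn^{**}\to\kn^{**}$ is automatically weak*-to-weak* continuous and restricts to $\psi=\varphi|_{\kn}$ on the canonical image of $\kn$; here the hypothesis that $\varphi$ carries $\kn$ into $\kn$ is precisely what makes $\psi$ an endomorphism of $\kn$, so that $\psi^{**}$ is a self-map of $\kn^{**}=\alg$. Viewing both $\psi^{**}$ and $\varphi$ as w*-continuous self-maps of $\alg$, they agree on the w*-dense subset $\kn$, and two w*-continuous maps into the Hausdorff space $\alg$ that coincide on a w*-dense set are equal; hence $\varphi=\psi^{**}$.

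For the equivalence I would quote Gantmacher's theorem applied to $\psi$. On one hand, $\psi$ is weakly compact if and only if $\psi^{**}$ maps $\kn^{**}$ into the canonical image of $\kn$, which through the identification reads exactly $\varphi(\alg)\subseteq\kn$. On the other hand, weak compactness passes between an operator and its adjoint, so $\psi$ is weakly compact if and only if $\psi^{**}=\varphi$ is weakly compact. Chaining these two equivalences yields that $\varphi$ is weakly compact if and only if $\varphi(\alg)\subseteq\kn$, as claimed.

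The step I expect to require the most care is the structural identification $\kn^{**}=\alg$ together with the matching of the two weak* topologies, because the density argument of the second paragraph hinges on the w*-continuity hypothesis on $\varphi$ being taken with respect to the same predual that governs the formation of $\psi^{**}$. Once the equality $\kn^{*}=\alg_{*}$ of preduals is secured from the w*-density of $\kn$ in $\alg$, the remaining arguments are the routine weak*-density and Gantmacher steps.
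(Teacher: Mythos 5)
Your proposal is correct and follows essentially the same route as the paper: both identify $\kn^{**}$ with $\alg$ (the paper cites \cite[Theorem 16.6]{dav}, which you re-derive from the w*-density of $\kn$ in $\alg$), establish $\varphi=(\varphi|_{\kn})^{**}$ by the same w*-continuity-plus-density argument, and get the forward implication from Gantmacher's theorem as in Dunford--Schwartz. The only divergence is the converse, where you again invoke Gantmacher's range criterion ($\psi$ is weakly compact if and only if $\psi^{**}$ maps the bidual into the canonical image), which is valid, while the paper instead argues directly that $\varphi((\alg)_1)$ is w*-compact and that the relative w*-topology on $\kn$ coincides with its weak topology --- an immaterial difference.
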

\begin{proof}
The steps of the proof are very similar to those of \cite[Proposition 2.1]{ake}. Note that the dual space of
$\mathcal{K}(\mathcal{N})$ is
$\mathcal{L}^1(H)/\mathcal{A}_0$ where $H$ is the the underlying Hilbert space of $\mathcal{N}$, $\mathcal{L}^1(H)$ the
space of the trace class operators on $H$ and $\mathcal{A}_0=\{T\in\mathcal{L}^1(H):P_-^{\perp}TP=0,\ \forall
P\in\mathcal{N}\}$. The second dual of $\mathcal{K}(\mathcal{N})$ is $\alg$ \cite[Theorem 16.6]{dav}. Note that
$(\varphi|_{\mathcal{K}(\mathcal{N})})^{**}:\alg\rightarrow\alg$ is w*-continuous as a dual operator and it
agrees with
the w*-continuous map $\varphi$ on the w*-dense set $\mathcal{K}(\mathcal{N})\subseteq\alg$, \cite[Corollary 3.13]{dav}.
Therefore $\varphi=(\varphi|_{\mathcal{K}(\mathcal{N})})^{**}$ since $\varphi$ and
$(\varphi|_{\mathcal{K}(\mathcal{N})})^{**}$ are w*-continuous and
$\overline{\mathcal{K}(\mathcal{N})}^{w^*}=\alg$.

Now assume that $\varphi$ is weakly compact. Then, $(\varphi|_{\mathcal{K}(\mathcal{N})})^{**}=\varphi$ is weakly 
compact,
whence
$\varphi|_{\mathcal{K}(\mathcal{N})}$ is weakly compact \cite[Theorem 8, p. 485]{ds}. This implies that
$\varphi(\alg)\subseteq\mathcal{K}(\mathcal{N})$,
\cite[Theorem 2, p. 482]{ds}.

Conversely, assume that $\varphi(\alg)\subseteq\mathcal{K}(\mathcal{N})$. The nest algebra $\alg$ is w*-closed
\cite[Proposition
2.2]{dav} and therefore the closed unit ball $(\alg)_1$ is w*-compact. By the w*-continuity of $\varphi$ the set
$\varphi((\alg)_1)$ is w*-compact. Therefore, the set $\varphi((\alg)_1)\subseteq\mathcal{K}(\mathcal{N})$ is weakly 
compact
since the
relative w*-topology of $\mathcal{K}(\mathcal{N})$ coincides with the weak topology on $\mathcal{K}(\mathcal{N})$.
\end{proof}

\begin{corollary} \label{cor1}
 Let $a,b\in\alg$. Then, the multiplication operator $M_{a,b}:\alg\rightarrow \alg$, $x\mapsto axb$ is 
weakly compact if and only if $M_{a,b}(\alg)\subseteq \kn$.
\end{corollary}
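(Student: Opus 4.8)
The plan is to obtain this corollary as an immediate application of Proposition \ref{wcmap} to the map $\varphi=M_{a,b}$. To do so I must check that $M_{a,b}$ meets the three standing hypotheses of that proposition: that it is bounded and linear, that it is w*-continuous, and that it maps $\kn$ into $\kn$. Granting these, the stated equivalence is exactly the conclusion of Proposition \ref{wcmap}, so no further argument is required.

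Two of the three hypotheses are routine. Linearity is clear and the estimate $\|M_{a,b}\|\leq\|a\|\,\|b\|$ gives boundedness. For the ideal condition, if $x\in\kn$ then $axb\in\mathcal{K}(H)$ because the compact operators form a two-sided ideal of $\mathcal{B}(H)$, while $axb\in\alg$ because $a,x,b\in\alg$; hence $axb\in\alg\cap\mathcal{K}(H)=\kn$, so indeed $M_{a,b}(\kn)\subseteq\kn$.

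The one point deserving an argument is w*-continuity, and here I would exhibit a preadjoint. Viewing $\alg$ inside $\mathcal{B}(H)$ with the w*-topology induced by the ultraweak topology of $\mathcal{B}(H)$, and using the trace pairing $\langle T,t\rangle=\tr(Tt)$ for $t$ a trace class operator, cyclicity of the trace gives $\tr\bigl((axb)t\bigr)=\tr\bigl(x(bta)\bigr)$ for every $x\in\alg$. Thus $M_{a,b}$ is the adjoint of the bounded map $t\mapsto bta$ on the predual and is therefore w*-continuous. I expect this identification of the preadjoint to be the only mildly delicate step; the accompanying facts that this map descends to the quotient predual $\mathcal{L}^1(H)/\mathcal{A}_0$ and that the w*-topology of $\alg$ is the restriction of the ultraweak topology of $\mathcal{B}(H)$ are standard and already implicit in the proof of Proposition \ref{wcmap}. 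With all three hypotheses confirmed, Proposition \ref{wcmap} yields that $M_{a,b}$ is weakly compact if and only if $M_{a,b}(\alg)\subseteq\kn$, which is the assertion.
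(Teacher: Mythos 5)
Your proposal is correct and matches the paper's (implicit) argument: the corollary is stated without separate proof precisely because it is the immediate application of Proposition \ref{wcmap} to $\varphi=M_{a,b}$, once one notes that $M_{a,b}$ is bounded, w*-continuous, and maps $\kn$ into $\kn$. Your verification of these hypotheses, including the preadjoint $t\mapsto bta$ on the trace-class pairing and its descent to $\mathcal{L}^1(H)/\mathcal{A}_0$ (which works because $\mathcal{A}_0$, the preannihilator of $\alg$, is invariant under this map since $axb\in\alg$), is exactly the routine checking the authors left to the reader.
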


\begin{lemma} \label{utility}
 Let $a,b\in\alg$ and $(e_n)_{\mathbb{N}}$,
$(f_n)_{n\in\mathbb{N}}$
orthonormal sequences in $H$ such that $e_n\otimes f_n\in\alg$ for all $n\in\mathbb{N}$. If there exists an 
$\varepsilon>0$
such that $\|a(f_n)\|\geq\varepsilon$ and $\|b^*(e_n)\|\geq\varepsilon$ for all $n\in\mathbb{N}$, then there
exists a strictly increasing sequence $(k_n)_{n\in\mathbb{N}}$ such that the
operator
$a\left(\sum_{n\in\mathbb{N}}e_{k_n}\otimes f_{k_n}\right)b=\sum_{n\in\mathbb{N}}b^*(e_{k_n})\otimes a(f_{k_n})\in\alg$ 
is
not compact and for any subsequence $(k_{n_m})_{m\in\mathbb{N}}$ the operator
$\sum_{n\in\mathbb{N}}b^*(e_{k_{n_m}})\otimes a(f_{k_{n_m}})\in\alg$ is non-compact as well.
\end{lemma}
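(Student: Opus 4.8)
The plan is to produce the subsequence by an almost-orthogonalization (gliding hump) argument and then to detect non-compactness through a weakly null test sequence; the decisive point is that almost orthogonality is inherited by every further subsequence, so the non-compactness will be automatically stable. Throughout set $g_n=b^*(e_n)$ and $h_n=a(f_n)$, so that $a(e_n\otimes f_n)b=g_n\otimes h_n$ and, by hypothesis, $\varepsilon\le\|g_n\|\le\|b\|$ and $\varepsilon\le\|h_n\|\le\|a\|$. Since $(e_n)$ and $(f_n)$ are orthonormal they are weakly null by Bessel's inequality, and hence so are $(g_n)$ and $(h_n)$, because $b^*$ and $a$ are bounded.

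The core step is to choose $k_1<k_2<\cdots$ inductively so that both $(g_{k_n})$ and $(h_{k_n})$ are \emph{almost orthogonal}, meaning $\sum_{m\ne n}|\langle g_{k_n},g_{k_m}\rangle|\le\delta$ and $\sum_{m\ne n}|\langle h_{k_n},h_{k_m}\rangle|\le\delta$ for all $n$, where $\delta>0$ is to be fixed in terms of $\varepsilon$, $\|a\|$ and $\|b\|$. This is possible because weak nullity gives $\langle g_{k_i},g_m\rangle\to0$ and $\langle h_{k_i},h_m\rangle\to0$ as $m\to\infty$ for each already chosen index $k_i$; at stage $n$ one takes $k_n$ large enough to satisfy finitely many smallness conditions against all earlier indices simultaneously, and choosing summable thresholds (for instance $2^{-n}\delta$) yields the stated uniform bounds.

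Next I would check that the series defines a bounded operator in $\alg$. Almost orthogonality makes the normalized vectors $g_{k_n}/\|g_{k_n}\|$ a Bessel sequence, so $\sum_n|\langle x,g_{k_n}\rangle|^2<\infty$ for every $x\in H$; together with $\|h_{k_n}\|\le\|a\|$ and the almost orthogonality of $(h_{k_n})$ this shows that the partial sums $\sum_{n\le N}\langle x,g_{k_n}\rangle h_{k_n}$ are Cauchy, so the series converges in the strong operator topology to a bounded operator $T$. Each partial sum equals $a\bigl(\sum_{n\le N}e_{k_n}\otimes f_{k_n}\bigr)b$, which lies in $\alg$; since $\alg$ is WOT-closed, hence SOT-closed, we conclude $T\in\alg$.

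Finally, for non-compactness I would test $T$ on the weakly null sequence $\hat g_{k_n}=g_{k_n}/\|g_{k_n}\|$. The diagonal term of $T\hat g_{k_n}$ is $\|g_{k_n}\|\,h_{k_n}$, of norm at least $\varepsilon^2$, while the off-diagonal contribution is at most $\tfrac{\|a\|}{\varepsilon}\sum_{m\ne n}|\langle g_{k_n},g_{k_m}\rangle|\le\tfrac{\|a\|}{\varepsilon}\delta$; fixing $\delta$ small enough gives $\|T\hat g_{k_n}\|\ge\varepsilon^2/2$ for all $n$. As a compact operator must carry weakly null sequences to norm null sequences, $T$ is non-compact. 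The identical estimate applies to any further subsequence $(k_{n_m})$: restricting to a subsequence only decreases the off-diagonal sums, so the lower bound $\varepsilon^2/2$ survives and the corresponding operator is non-compact as well. The main technical burden is the simultaneous almost-orthogonalization of the two sequences and the verification of strong convergence, the subtlety being that the individual summands $g_{k_n}\otimes h_{k_n}$ do not tend to zero in norm, so ordinary norm convergence fails; once these are in place, non-compactness and its stability under passing to subsequences come essentially for free.
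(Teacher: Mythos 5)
Your proposal is correct and follows essentially the same route as the paper: the same inductive gliding-hump construction of $(k_n)$ making the sequences $(b^*(e_{k_n}))$ and $(a(f_{k_n}))$ almost orthogonal with summable thresholds, followed by a diagonal-dominance estimate combined with the fact that compact operators send weakly null sequences to norm-null ones. The only variation is in how the final estimate is packaged --- the paper derives a contradiction from the matrix coefficient $\langle a^*Tb^*(e_{k_{m_0}}),f_{k_{m_0}}\rangle=\sum_n\lambda_n\mu_n$, while you test $T$ directly on the normalized weakly null vectors $b^*(e_{k_n})/\|b^*(e_{k_n})\|$, which has the mild advantage of making the stability under passing to subsequences immediate rather than requiring the argument to be rerun.
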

\begin{proof}
First we construct the sequence $(k_n)_{n\in\mathbb{N}}\subseteq \mathbb{N}$ by induction. We set $k_1=1$. Suppose we 
have
determined $k_i$ for all $i\in\{2,\ldots,n-1\}$ for some $n\in\mathbb{N}$. Then, we choose $k_n\in\mathbb{N}$ such that 
\begin{equation}\label{sub1}
 |\langle a(f_{k_m}),a(f_{k_n})\rangle|=|\langle
a^*a(f_{k_m}),f_{k_n}\rangle|<\frac{\varepsilon^2}{3\cdot 2^n}
\end{equation}
and 
\begin{equation}\label{sub2}
 |\langle b^*(e_{k_m}),b^*(e_{k_n})\rangle|=|\langle bb^*(e_{k_m}),e_{k_n}\rangle|<\frac{\varepsilon^2}{3\cdot 2^n}
\end{equation}
for all $m\in\mathbb{N}$, with $m<n$.

We suppose that the
operator $\sum_{n\in\mathbb{N}}b^*(e_{k_n})\otimes a(f_{k_n})$ is compact. Then, the operator
$a^*\left(\sum_{n\in\mathbb{N}}b^*(e_{k_n})\otimes a(f_{k_n})\right)b^*$ is compact as
well and therefore, there exists a $m_0\in\mathbb{N}$ such that,
\begin{eqnarray}\label{lemeq2}
\frac{\varepsilon^4}{2} &> \left|\left\langle a^*\left(\sum_{n\in\mathbb{N}}b^*(e_{k_n})\otimes
a(f_{k_n})\right)b^*(e_{k_{m_0}}),f_{k_{m_0}}\right\rangle\right|\nonumber \\
&= \left|\left\langle \left(\sum_{n\in\mathbb{N}}b^*(e_{k_n})\otimes
a(f_{k_n})\right)b^*(e_{k_{m_0}}),a(f_{k_{m_0}})\right\rangle\right|\\
 &= \left|\sum_{n\in\mathbb{N}}\left\langle b^*(e_{k_{m_0}}),b^*(e_{k_n})\right\rangle\left\langle
a(f_{k_n}),a(f_{k_{m_0}})\right\rangle\right|\nonumber
\end{eqnarray}
For all $n\in\mathbb{N}$ we set 
\begin{eqnarray*}
\lambda_n &=& \langle b^*(e_{k_{m_0}}), b^*(e_{k_n})\rangle\\
\mu_n &=& \langle a(f_{k_n}), a(f_{k_{m_0}})\rangle.
\end{eqnarray*}
Note that 
$\lambda_{m_0}=\langle b^*(e_{k_{m_0}}), b^*(e_{k_{m_0}})\rangle =\|b^*(e_{k_{m_0}})\|^2\geq\varepsilon^2$ and similarly
$\mu_{m_0}\geq\varepsilon^2$. From the inequalities (\ref{sub1}) and (\ref{sub2}) it follows that
\begin{equation*}
 |\lambda_n||\mu_n|<\left(\frac{\varepsilon^2}{3\cdot 2^{m_0}}\right)^2
\end{equation*}
for all $n<m_0$ and
\begin{equation*}
 |\lambda_n||\mu_n|<\left(\frac{\varepsilon^2}{3\cdot 2^{n}}\right)^2
\end{equation*}
for all $n>m_0$. Thus, the inequality (\ref{lemeq2}) implies that
\begin{eqnarray*}
 \frac{\varepsilon^4}{2}&>&\left|\sum_{n\in\mathbb{N}}\lambda_n \mu_n\right|\\
&\geq& \lambda_{m_0} \mu_{m_0}-\left|\sum_{n\neq m_{0}}\lambda_n \mu_n\right|\\
&\geq& \varepsilon^4-\sum_{n\neq m_{0}}\left|\lambda_n \mu_n\right|\\
&=& \varepsilon^4 - \sum_{n< m_{0}}\left|\lambda_n \mu_n\right|-\sum_{n> m_{0}}\left|\lambda_n \mu_n\right|\\
&>& \varepsilon^4-(m_0-1)\left(\frac{\varepsilon^2}{3\cdot 
2^{m_0}}\right)^2-\sum_{n>m_0}\left(\frac{\varepsilon^2}{3\cdot
2^n}\right)^2\\
&>& \varepsilon^4-\frac{\varepsilon^4}{9}\sum_{n\in\mathbb{N}}\frac{1}{2^n}\\
&>v \varepsilon^4-\frac{\varepsilon^4}{9}=\frac{8\varepsilon^4}{9},
\end{eqnarray*}
which is a contradiction and therefore the operator $a\left(\sum_{n\in\mathbb{N}}e_{k_n}\otimes f_{k_n}\right)b\in\alg$ 
is
not compact. It is obvious that we can follow the above steps of the proof for all subsequences
$(k_{n_m})_{m\in\mathbb{N}}$ of $(k_n)_{n\in\mathbb{N}}$. Therefore, the operator
$\sum_{n\in\mathbb{N}}b^*(e_{k_{n_m}})\otimes a(f_{k_{n_m}})\in\alg$ is non-compact as well.
\end{proof}

The following lemma provides us with a sufficient condition for the weak compactness of a multiplication operator.

\begin{lemma} \label{suff}
 Let $a,b\in\alg$. If there exists a projection $P\in\mathcal{N}$ such 
that the operators $PaP$ and $P^{\perp}bP^{\perp}$ are both compact, then the multiplication operator
$M_{a,b}:\alg\rightarrow \alg$,
$x\mapsto axb$ is weakly compact.
\end{lemma}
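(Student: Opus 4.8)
The plan is to reduce weak compactness to a range condition and then verify that condition by an explicit decomposition of $axb$ around the projection $P$. By Corollary \ref{cor1}, the operator $M_{a,b}$ is weakly compact if and only if $M_{a,b}(\alg)\subseteq\kn$, so it suffices to prove that $axb$ is a compact operator for every $x\in\alg$.

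First I would record the identities forced by membership in the nest algebra. Since $a,b,x\in\alg$ and $P\in\mathcal{N}$, we have $P^{\perp}aP=P^{\perp}bP=P^{\perp}xP=0$; equivalently $aP=PaP$ and $P^{\perp}b=P^{\perp}bP^{\perp}$. Writing $a=aP+aP^{\perp}$ and $b=Pb+P^{\perp}b$ then gives
\[
axb=aPxPb+aPxP^{\perp}b+aP^{\perp}xPb+aP^{\perp}xP^{\perp}b.
\]
The third summand vanishes, because $aP^{\perp}xPb=a\left(P^{\perp}xP\right)b=0$. Substituting $aP=PaP$ and $P^{\perp}b=P^{\perp}bP^{\perp}$ in the surviving three terms, I obtain
\[
axb=(PaP)xPb+(PaP)x\left(P^{\perp}bP^{\perp}\right)+aP^{\perp}x\left(P^{\perp}bP^{\perp}\right).
\]

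Now each summand carries one of the two hypothesised compact factors: $PaP$ in the first two terms and $P^{\perp}bP^{\perp}$ in the last. Since $\mathcal{K}(H)$ is a two-sided ideal in $\mathcal{B}(H)$, every such product of a compact operator with bounded operators is compact, so $axb$ is compact; being also an element of $\alg$, it lies in $\kn$. As $x\in\alg$ was arbitrary, $M_{a,b}(\alg)\subseteq\kn$, and Corollary \ref{cor1} yields that $M_{a,b}$ is weakly compact. I do not anticipate a genuine obstacle here: the entire argument rests on choosing the splitting of $axb$ so that the surviving terms each inherit a compact factor, the only points needing care being the vanishing of the cross term $aP^{\perp}xPb$ and the identity $P^{\perp}b=P^{\perp}bP^{\perp}$, both immediate consequences of the nest algebra relations attached to $P$.
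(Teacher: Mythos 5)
Your proof is correct and follows essentially the same route as the paper: both decompose $axb$ around $P$ using the nest algebra relations $P^{\perp}aP=P^{\perp}bP=P^{\perp}xP=0$ so that every surviving term carries either the compact factor $PaP$ or $P^{\perp}bP^{\perp}$, and then invoke Corollary \ref{cor1} together with the ideal property of the compacts. The only cosmetic difference is that the paper groups the terms as the sum of two multiplication operators $M_{PaP,\,b}+M_{aP^{\perp},\,P^{\perp}bP^{\perp}}$, whereas you keep three summands; your first two terms combine to give exactly the paper's first one.
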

\begin{proof}
 Suppose that there exists a projection $P\in\mathcal{N}$ such that the 
operators $PaP$ and $P^{\perp}bP^{\perp}$ are both compact. Let $x\in\alg_1$. 
Then,
\begin{eqnarray*}
 M_{a,b}(x) &=& axb\\
&=& 
(PaP+PaP^{\perp}+P^{\perp}aP^{\perp})x(PbP+PbP^{\perp}+P^{\perp}bP^{\perp})\\
&=& PaPxb+(PaP^{\perp}+P^{\perp}aP^{\perp})xP^{\perp}bP^{\perp}\\
&=& M_{PaP,b}(x)+M_{(PaP^{\perp}+P^{\perp}aP^{\perp}),P^{\perp}bP^{\perp}}(x).
\end{eqnarray*}
It follows that the multiplication operators $M_{PaP,b}$ and 
$M_{(PaP^{\perp}+P^{\perp}aP^{\perp}),P^{\perp}bP^{\perp}}$ are weakly compact 
since the operators $PaP$ and $P^{\perp}bP^{\perp}$ are both compact (Corollary \ref{cor1}).
\end{proof}

The next lemma gives a necessary condition for the weak compactness of a multiplication operator.

\begin{lemma} \label{nec}
 Let $a,b\in\alg$. If the multiplication operator
$M_{a,b}:\alg\rightarrow \alg$,
$x\mapsto axb$ is weakly compact, then for all $P\in\mathcal{N}$, either the operator $PaP$ is compact or the operator
$P^{\perp}b P^{\perp}$ is compact.
\end{lemma}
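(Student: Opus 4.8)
The plan is to prove the contrapositive. Assuming there is a projection $P\in\mathcal N$ for which \emph{both} $PaP$ and $P^\perp bP^\perp$ are non-compact, I will exhibit an $x\in\alg$ with $M_{a,b}(x)=axb$ non-compact; then $M_{a,b}(\alg)\not\subseteq\kn$, and by Corollary \ref{cor1} this forces $M_{a,b}$ to fail to be weakly compact. Thus the whole argument reduces to manufacturing one bad element in the range, and the engine for that will be Lemma \ref{utility}.

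The first step produces two orthonormal sequences from the two non-compact compressions, taking care to place them in the correct subspaces. Since $PaP$ is non-compact, the positive operator $(PaP)^*(PaP)$ is non-compact, so for small enough $\delta>0$ its spectral projection $E=\chi_{[\delta^2,\infty)}\big((PaP)^*(PaP)\big)$ has infinite rank. Because $P^\perp H\subseteq\ker(PaP)$, the range of $E$ lies in $(\ker PaP)^\perp\subseteq PH$; choosing an orthonormal sequence $(f_n)$ in $EH$ thus gives $f_n\in PH$, and since $Pf_n=f_n$ we get $\|af_n\|\ge\|PaPf_n\|\ge\delta$. Applying the same reasoning to $S=P^\perp b^*P^\perp=(P^\perp bP^\perp)^*$, which is again non-compact, and using $PH\subseteq\ker S$ so that $S^*S=(P^\perp bP^\perp)(P^\perp bP^\perp)^*$ has its large spectral subspace inside $P^\perp H$, I obtain an orthonormal sequence $(e_n)$ in $P^\perp H$ with $\|b^*e_n\|\ge\|Se_n\|\ge\delta'$ for some $\delta'>0$. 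Setting $\varepsilon=\min\{\delta,\delta'\}$ yields $\|af_n\|\ge\varepsilon$ and $\|b^*e_n\|\ge\varepsilon$ for all $n$.

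Next I verify the membership $e_n\otimes f_n\in\alg$. Since $P_-\le P$ we have $P^\perp\le P_-^\perp$, so $e_n\in P^\perp H\subseteq P_-^\perp$ while $f_n\in P$; taking $N=P$ in the rank-one criterion $e\otimes f\in\alg\iff e\in N_-^\perp,\ f\in N$ gives the claim. Now both hypotheses of Lemma \ref{utility} hold, so it furnishes a strictly increasing sequence $(k_n)$ for which $x=\sum_n e_{k_n}\otimes f_{k_n}$ satisfies $M_{a,b}(x)=axb=\sum_n b^*(e_{k_n})\otimes a(f_{k_n})\notin\kn$. Here $x\in\alg$ because its partial sums lie in $\alg$ and converge in SOT while $\alg$ is SOT-closed (equivalently, one checks directly that $x=PxP^\perp$, whence $P_N^\perp xP_N=0$ for every $N\in\mathcal N$). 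By Corollary \ref{cor1} the existence of this $x$ shows $M_{a,b}$ is not weakly compact, which is the desired contradiction.

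I expect the main obstacle to be precisely the first step: the sequences must be forced to live in $PH$ and $P^\perp H$ respectively, since otherwise $e_n\otimes f_n$ need not belong to $\alg$ and the lower bounds $\|af_n\|\ge\varepsilon$, $\|b^*e_n\|\ge\varepsilon$ could fail. Routing the extraction through the spectral projections of $(PaP)^*(PaP)$ and $(P^\perp bP^\perp)(P^\perp bP^\perp)^*$, combined with the kernel inclusions $P^\perp H\subseteq\ker(PaP)$ and $PH\subseteq\ker(P^\perp b^*P^\perp)$, is what pins the two sequences into the correct subspaces. Once that is arranged, Lemma \ref{utility} and Corollary \ref{cor1} do the remaining work.
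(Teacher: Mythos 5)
Your proof is correct, but it takes a genuinely different route from the paper. The paper's proof is a two-line reduction: it observes that $M_{a,b}$ maps the corner $P\alg P^{\perp}=P\mathcal{B}(H)P^{\perp}$ into itself (since $aP=PaP$ and $P^{\perp}b=P^{\perp}bP^{\perp}$ for $a,b\in\alg$), that on this corner it coincides with $M_{PaP,P^{\perp}bP^{\perp}}$ acting on a space of the form $\mathcal{B}(P^{\perp}H,PH)$, and then it cites Akemann and Wright's characterization of weakly compact multiplication operators on $\mathcal{B}(H)$ (Proposition 2.3 of their paper) to conclude that $PaP$ or $P^{\perp}bP^{\perp}$ must be compact. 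You instead prove the contrapositive by hand: from the non-compactness of $PaP$ and $P^{\perp}bP^{\perp}$ you extract orthonormal sequences via spectral projections of $(PaP)^{*}(PaP)$ and $(P^{\perp}bP^{\perp})(P^{\perp}bP^{\perp})^{*}$, correctly using the kernel inclusions to pin $(f_n)$ into $PH$ and $(e_n)$ into $P^{\perp}H$ so that $e_n\otimes f_n\in\alg$ (via $P^{\perp}\leq P_-^{\perp}$) and the lower bounds $\|af_n\|\geq\varepsilon$, $\|b^{*}e_n\|\geq\varepsilon$ hold; Lemma \ref{utility} then manufactures $x=\sum_n e_{k_n}\otimes f_{k_n}\in(\alg)_1$ with $axb$ non-compact, and the direction ``weakly compact $\Rightarrow$ range in $\kn$'' of Corollary \ref{cor1} finishes the argument. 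All steps check out, including the membership $x\in\alg$ (SOT-closedness, or directly $x=PxP^{\perp}$). What each approach buys: the paper's proof is maximally short but leans on an external result; yours is self-contained modulo the paper's own machinery, in effect re-proving the needed direction of Akemann--Wright in this corner setting, and it closely parallels the technique the paper itself uses later in the necessity half of Theorem \ref{mainweak} (where it extracts orthonormal sequences, there citing Deitmar--Echterhoff, and feeds them to Lemma \ref{utility}). Minor remark: you only need the existence of one bad subsequence, so the subsequence-robustness clause of Lemma \ref{utility} is unused surplus in your argument.
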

\begin{proof}
 Let $P\in\mathcal{N}$. It follows that the multiplication operator 
 \begin{equation*}
  M_{a,b}:P\alg P^{\perp}\rightarrow P\alg P^{\perp}
 \end{equation*}
is weakly
compact or equivalently the multiplication operator 
\begin{equation*}
 M_{PaP,P^{\perp}bP^{\perp}}:\mathcal{B}(H)\rightarrow \mathcal{B}(H)
\end{equation*}
is weakly
compact. Therefore, either the operator $PaP$ is compact or the operator $P^{\perp}bP^{\perp}$ is compact 
\cite[Proposition
2.3]{ake}.
\end{proof}

Now, we proceed to the main theorem of this section. To do so, we introduce the following projections:
\begin{equation*}
 U_a=\vee\{P\in\mathcal{N}: PaP \textrm{ is a compact operator}\}
\end{equation*}
and
\begin{equation*}
 L_b=\wedge\{P\in\mathcal{N}: P^{\perp}bP^{\perp} \textrm{ is a compact operator}\},
\end{equation*}
where $a,b\in\alg$.

\begin{theorem} \label{mainweak}
  Let $a,b\in\alg$. The multiplication operator $M_{a,b}:\alg\rightarrow \alg$, $x\mapsto axb$ is 
weakly compact if and only if one of the following conditions is satisfied:
\begin{enumerate}
 \item $U_a>L_b$.\label{1}
 \item $U_a=L_b=S$ and the operators $SaS$ and $S^{\perp}bS^{\perp}$ are both compact.\label{2}
 \item $U_a=L_b=S$, the operator $SaS$ is compact, the operator $S^{\perp}bS^{\perp}$ is non-compact and for any
$\varepsilon>0$, there exists a projection $P\in\mathcal{N}$,
$P>S$ such that $\|a(P-S)\|<\varepsilon$.\label{3}
 \item $U_a=L_b=S$, the operator $S^{\perp}bS^{\perp}$ is compact, the operator $SaS$ is non-compact and for any
$\varepsilon>0$, there exists a projection
$P\in\mathcal{N}$, $P<S$ such that $\|(S-P)b\|<\varepsilon$.\label{4}
\end{enumerate}
\end{theorem}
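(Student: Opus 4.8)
The plan is to run everything through the characterization in Corollary~\ref{cor1}: $M_{a,b}$ is weakly compact if and only if $M_{a,b}(\alg)\subseteq\kn$. I record the elementary facts used repeatedly. Since $a,b\in\alg$, for every $S\in\mathcal{N}$ we have $aS=SaS$ and $S^\perp b=S^\perp bS^\perp$, and for $x\in\alg$ and $P\in\mathcal{N}$ we have $P^\perp xP=0$, hence $xP=PxP$ and $P^\perp x=P^\perp xP^\perp$. Next, the sets $C_a=\{P\in\mathcal{N}:PaP\text{ compact}\}$ and $D_b=\{P\in\mathcal{N}:P^\perp bP^\perp\text{ compact}\}$ are an initial and a final segment of the nest, with $U_a=\vee C_a$ and $L_b=\wedge D_b$. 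Consequently $P<U_a\Rightarrow P\in C_a$ and $P>L_b\Rightarrow P\in D_b$, while $P>U_a\Rightarrow P\notin C_a$ and $P<L_b\Rightarrow P\notin D_b$.

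For the forward direction assume $M_{a,b}$ weakly compact, so by Lemma~\ref{nec} we have $C_a\cup D_b=\mathcal{N}$. First I would establish $U_a\ge L_b$. If not, either there is $P$ with $U_a<P<L_b$, which then lies in neither $C_a$ nor $D_b$, contradicting $C_a\cup D_b=\mathcal{N}$; or $U_a,L_b$ are consecutive with atom $E=L_b\ominus U_a$. In the latter case $U_aaU_a$ and $L_b^\perp bL_b^\perp$ are compact (as $U_a\in C_a$, $L_b\in D_b$), while $L_baL_b=U_aaU_a+L_baE$ is non-compact, so a short computation gives that $aE$ and $EbU_a^\perp$ are non-compact; hence $E$ is infinite-dimensional and I can extract orthonormal sequences in $E$ to feed Lemma~\ref{utility}, producing $x\in\alg$ with $M_{a,b}(x)$ non-compact, contradicting Corollary~\ref{cor1}. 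Thus $U_a\ge L_b$. If $U_a>L_b$ we are in~\ref{1}. If $U_a=L_b=:S$, Lemma~\ref{nec} at $S$ forbids both $SaS$ and $S^\perp bS^\perp$ from being non-compact, and when both are compact we are in~\ref{2}.

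For the converse, \ref{1} and \ref{2} follow from Lemma~\ref{suff}: in~\ref{2} take $P=S$, and in~\ref{1} choose $P\in C_a$ with $P>L_b$, so $P\in D_b$ as well and both $PaP$, $P^\perp bP^\perp$ are compact. For~\ref{3}, given $\varepsilon>0$ I would pick $P>S$ with $\|a(P-S)\|<\varepsilon$ and split $a=aP+aP^\perp$. Using $aP=SaS+a(P-S)$ with $SaS$ compact, and $P^\perp bP^\perp$ compact (since $P>L_b$), one checks $M_{a,b}=M_{SaS,b}+M_{aP^\perp,P^\perp bP^\perp}+M_{a(P-S),b}$, where the first two summands map into $\kn$ (Corollary~\ref{cor1}) and the last has norm $<\varepsilon\|b\|$. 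Letting $\varepsilon\to0$ and using that the weakly compact operators form a norm-closed subspace yields weak compactness. Condition~\ref{4} is symmetric, now choosing $P<S$ with $\|(S-P)b\|<\varepsilon$, splitting the right symbol, and using that $PaP$ is compact for $P<S=U_a$.

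The remaining and hardest point is the forward direction when $U_a=L_b=S$ with exactly one of $SaS$, $S^\perp bS^\perp$ compact, say $SaS$ compact and $S^\perp bS^\perp$ non-compact, where I must deduce the approximation condition of~\ref{3}. I would argue contrapositively: if it fails there is $\varepsilon_0>0$ with $\|a(P-S)\|\ge\varepsilon_0$ for all $P>S$, giving vectors $f$ with $\|af\|$ bounded below arbitrarily close above $S$; and since $P^\perp bP^\perp$ is compact for every $P>S$ while $S^\perp bS^\perp$ is not, the operators $(P-S)bS^\perp$ are non-compact for all $P>S$, furnishing $b^*$-large vectors arbitrarily close above $S$ as well. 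The main obstacle is to interleave these into orthonormal sequences $(f_n)$, $(e_n)$ accumulating at $S$ with $e_n$ placed one nest-interval above $f_n$, so that $e_n\otimes f_n\in\alg$ while $\|a(f_n)\|\ge\varepsilon$ and $\|b^*(e_n)\|\ge\varepsilon$; this requires a simultaneous induction descending the nest toward $S$ (when there is an infinite-dimensional atom just above $S$ the construction collapses to choosing the two sequences independently in the atom and in $S^\perp$). Lemma~\ref{utility} then produces $x\in\alg$ with $M_{a,b}(x)$ non-compact, contradicting Corollary~\ref{cor1}; hence the condition of~\ref{3} holds, and the case of~\ref{4} follows by the symmetric argument.
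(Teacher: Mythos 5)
Your proposal follows essentially the paper's own route: Lemma \ref{nec} combined with the observation that $C_a=\{P\in\mathcal{N}:PaP \text{ compact}\}$ and $D_b=\{P\in\mathcal{N}:P^{\perp}bP^{\perp} \text{ compact}\}$ are an initial and a final segment gives the trichotomy on $U_a$ versus $L_b$ (with the consecutive case $U_{a+}=L_b$ excluded via Lemma \ref{utility}); sufficiency of the first two conditions via Lemma \ref{suff}, of the last two via the three-term splitting of $M_{a,b}$ and norm-closedness of the weakly compact operators; and the hard implication by contradiction through Lemma \ref{utility}. Your one genuine improvement is in the sufficiency of condition (i): producing $P\in C_a\cap D_b$ directly from the segment structure unifies the paper's two subcases ($U_a>P>L_b$ versus $U_aL_b^{\perp}$ an atom, where the paper must argue separately that $U_a\in C_a$ and $L_b\in D_b$).

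Two caveats on the hard case, which is exactly where you stop short. First, the ``simultaneous induction descending the nest toward $S$'' that you defer is the real content of the paper's proof: it takes a decreasing sequence $P_n\downarrow S$ in SOT, picks a unit vector $e$ in $P_k-S$ with $\|b^*(P_k-S)e\|\geq 2\varepsilon/3$, uses $\|b^*(P_m-S)e\|\to 0$ (by SOT-convergence) to truncate $e$ into a finite band $P_k-P_{k'}$ while keeping $\|b^*e\|\geq\varepsilon/3$, then repeats with an $a$-vector $f$ in the next band down, yielding $e_n\in P_{2n-1}-P_{2n}$, $f_n\in P_{2n}-P_{2n+1}$ and $e_n\otimes f_n\in\alg$. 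For this induction you need not just non-compactness of each $(P-S)b$ but a \emph{uniform} lower bound $\|(P-S)b\|\geq\varepsilon_2>0$ for all $P>S$; it holds because $\|(P-S)b\|\geq\mathrm{dist}(S^{\perp}b,\mathcal{K}(H))>0$, as $P^{\perp}b$ is compact while $S^{\perp}b$ is not --- your phrase ``$b^*$-large vectors arbitrarily close above $S$'' glosses over this point. Second, your parenthetical case of an infinite-dimensional atom just above $S$ is vacuous: since $S^{\perp}bS^{\perp}$ is non-compact, $S\notin D_b$, so if $S_+>S$ then every element of $D_b$ would be $\geq S_+$ and $L_b\geq S_+>S$, a contradiction. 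The paper records precisely this as $S_+=S$, so the descending sequence always exists and no separate atom case arises.
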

\begin{proof}
Suppose that $U_a>L_b$. If there exist a projection $P\in\mathcal{N}$ such that $U_a>P>L_b$, then the
operators $PaP$ and $P^{\perp}bP^{\perp}$ are both compact. If $U_a L_b^{\perp}$ is an atom, then the operators 
$U_aaU_a$ and
$L_b^{\perp}bL_b^{\perp}$ are both compact and therefore the operator $U_a^{\perp}bU_a^{\perp}$ is compact as well since
$U_a=L_{b+}$. Thus, the multiplication operator $M_{a,b}$ is weakly compact (Lemma \ref{suff}).

If condition (\ref{2}) holds, the weak compactness of $M_{a,b}$ follows from Lemma \ref{suff} as well. 

 We suppose that condition (\ref{3}) is satisfied. Let $\varepsilon>0$ and $P\in\mathcal{N}$,
$P>S$ such that $\|a(P-S)\|<\varepsilon$. Then
\begin{equation*}
 a=aS+a(P-S)+aP^{\perp}
\end{equation*}
and
\begin{equation*}
 b=Sb+(S^{\perp}-P^{\perp})b+P^{\perp}b.
\end{equation*}
Let $x\in (\alg)_1$. Then
\begin{eqnarray*}
 M_{a,b}(x) &=& axb\\
&=& (aS+a(P-S)+aP^{\perp})xb\\
&=& aSxb+a(P-S)xb+aP^{\perp}x(Sb+(S^{\perp}-P^{\perp})b+P^{\perp}b)\\
&=& aSxb+a(P-S)xb+aP^{\perp}xP^{\perp}b.
\end{eqnarray*}
The operator $M_{a,b}(x)$ is compact since the operators $aS$ and $P^{\perp}b$ are both compact and
$\|a(P-S)xb\|<\varepsilon$. The multiplication operator $M_{a,b}$ is weakly
compact as the set of weakly compact operators is norm closed \cite[II.C \S6]{woj}.

\hspace{0.05em}Condition (\ref{4}) is symmetric to condition (\ref{3}) and the proof is similar.

\hspace{0.05em}Now, suppose that the multiplication operator $M_{a,b}$ is weakly compact. Then, if $U_a<L_b$ we 
distinguish two cases.
\begin{enumerate}
 \item Suppose that there exists a projection $P\in\mathcal{N}$ such that $U_a<P<L_b$. In that case the
operators $PaP$ and $P^{\perp}bP^{\perp}$ are both non-compact which is a contradiction by Lemma \ref{nec}.
 \item Suppose that $U_{a+}=L_b$. Then, the operators $aL_b$ and $L_{b-}^{\perp} b$ are both
non-compact. Let $\varepsilon>0$ and $(e_n)_{n\in\mathbb{N}}\subseteq L_b$, $(f_n)_{n\in\mathbb{N}}\subseteq 
L_{b-}^{\perp}$
be orthonormal sequences such that $\|a(e_n)\|\geq\varepsilon$ and $\|b^*(f_n)\|\geq\varepsilon$ for all 
$n\in\mathbb{N}$,
\cite[Proposition 5.2.1]{de}. Then, there are subsequences $(e_{k_n})_{n\in\mathbb{N}}$ and $(f_{k_n})_{n\in\mathbb{N}}$
such that
the operator $a\left(\sum_{n\in\mathbb{N}}e_{k_n}\otimes f_{k_n}\right)b\in M_{a,b}((\alg)_1)$ is not compact (Lemma
\ref{utility}).
From Corollary \ref{cor1} we conclude that the multiplication operator $M_{a,b}$ is not weakly compact, that is a
contradiction.
\end{enumerate}

Now, we examine the only two possible cases, $U_a>L_b$ and $U_a=L_b=S$. The first one is condition (\ref{1}) of 
this 
theorem, so
we study the second case. In that case, either the operator $SaS$ is compact or the operator $S^{\perp}bS^{\perp}$ is
compact (Lemma \ref{nec}). We suppose that the operator $SaS$ is compact. If the operator $S^{\perp}bS^{\perp}$ is 
compact
as well, the condition (\ref{2}) is satisfied. We shall see that if the operator $S^{\perp}bS^{\perp}$ is not 
compact, then
condition (\ref{3}) holds.

Suppose that the operator $SaS$ is compact, the operator $S^{\perp}b S^{\perp}$ is not compact and there exists an
$\varepsilon_1>0$ such that for all $P\in\mathcal{N}$, with $P>S$, the inequality $\|a(P-S)\|\geq \varepsilon_1$ holds. 
We
observe that $S_+=S$ (if $S_+>S$, the operator $S_+^{\perp}bS_+^{\perp}$ would be compact and then
$S=Q_b=S_+$). The operator $P^{\perp}bP^{\perp}$ is compact, for all $P\in\mathcal{N}$, with $P>S$. It follows that
$\|(P-S)b\|\geq\varepsilon_2$ or equivalently $\|b^*(P-S)\|\geq\varepsilon_2$ for some $\varepsilon_2>0$, since the 
operator
$S^{\perp}bS^{\perp}$ is not compact. Let $(P_n)_{n\in\mathbb{N}}$ be a decreasing sequence with $P_n>S$ for all
$n\in\mathbb{N}$ such that $\sot-\lim_{n\rightarrow\infty} P_n=S$, \cite[Theorem 2.13]{dav}. We set
$\varepsilon=\max\{\varepsilon_1,\varepsilon_2\}$. Then, for all $n\in\mathbb{N}$,
$\|a(P_n-S)\|\geq\varepsilon$ and $\|b^*(P_n-S)\|\geq\varepsilon$. We choose a norm one vector $e_1\in P_1-S$ such that
$\|b^*(P_1-S)e_1\|\geq\frac{2\varepsilon}{3}$. The SOT-convergence of the sequence $(P_n)_{n\in\mathbb{N}}$ implies that
$\lim_{n\rightarrow\infty}\|b^*(P_n-S)(e_1)\|\leq \|b^*\|\lim_{n\rightarrow\infty}\|(P_n-S)(e_1)\|=0$ and therefore, 
there
exists a $k_2\in\mathbb{N}$, $k_2>1$ such that $\|b^*(P_{k_2}-S)(e_1)\|<\frac{\varepsilon}{3}$. Then,
\begin{eqnarray*}
 \frac{2\varepsilon}{3} &\leq& \|b^*(P_1-S)(e_1)\|\\
&=& \|b^*(P_1-P_{k_2})(e_1)+b^*(P_{k_2}-S)(e_1)\|\\
&\leq& \|b^*(P_1-P_{k_2})(e_1)\|+\|b^*(P_{k_2}-S)(e_1)\|\\
&\leq& \|b^*(P_1-P_{k_2})(e_1)\|+\frac{\varepsilon}{3}.
\end{eqnarray*}
It follows that 
\begin{equation*}
 \|b^*(P_1-P_{k_2})(e_1)\|\geq\frac{\varepsilon}{3}.
\end{equation*}
We set $k_1=1$ and we may suppose that $e_1\in P_{k_1}-P_{k_2}$.

Now, we choose a norm one vector $f_1\in P_{k_2}-S$ such that $\|a(P_{k_2}-S)f_1\|\geq \frac{2\varepsilon}{3}$. 
Repeating
the arguments of the previous paragraph, we find a $k_3\in\mathbb{N}$, $k_3>k_2$, such that 
\begin{equation*}
 \|a(P_{k_2}-P_{k_3})f_1\|\geq\frac{\varepsilon}{3},
\end{equation*}
while considering that $f_1\in P_{k_2}-P_{k_3}$. Using these arguments, one can construct by induction a subsequence
$(P_{k_n})_{n\in\mathbb{N}}$ and two orthonormal sequences $(e_n)_{n\in\mathbb{N}}$ and $(f_n)_{n\in\mathbb{N}}$ with 
the
following properties:
\begin{enumerate}
 \item[(i)] $e_n\in P_{2n-1}-P_{2n}$ and $f_n\in P_{2n}-P_{2n+1}$, for all $n\in\mathbb{N}$.
 \item[(ii)] $\|b^*(e_n)\|=\|b^*(P_{2n-1}-P_{2n})(e_n)\|>\frac{\varepsilon}{3}$ and\\ $\|a(f_n)\| =
\|a(P_{2n}-P_{2n+1})(f_n)\|>\frac{\varepsilon}{3}$, for all $n\in\mathbb{N}$.
\end{enumerate}
 Lemma \ref{utility} shows that there exist subsequences $(e_{k_n})_{n\in\mathbb{N}}$ and $(f_{k_n})_{n\in\mathbb{N}}$ 
such
that the operator $a\left(\sum_{n\in\mathbb{N}} e_{k_n}\otimes f_{k_n}\right)b\in M_{a,b}((\alg)_1)$ is
not compact and Proposition \ref{wcmap} leads us to a contradiction. Therefore, condition (\ref{3}) is satisfied.

The proof in the last case (i.e. $S=U_a=L_b$, $S^{\perp}b S^{\perp}$ is compact and $SaS$ is not compact) is
similar to the previous case, and we omit the details.
\end{proof}

\begin{remmark} \label{conditions}
Observe that if we suppose that the multiplication operator $M_{a,b}$ is not weakly compact, then the conditions of 
Lemma
\ref{utility}
are satisfied. We shall use this fact in the proof of Theorem \ref{Calkin}.
\end{remmark}

The next theorem provides an other characterization of weakly compact multiplication operators.

\begin{theorem}
 Let $a,b\in\alg$. The multiplication operator $M_{a,b}:\alg\rightarrow\alg$ is weakly compact if and only if
for all $\varepsilon>0$ there exist two projections $P_1,P_2\in\mathcal{N}$, with $P_1\leq P_2$, such that the operators
$P_1aP_1$ and $P_2^{\perp}bP_2^{\perp}$ are both compact and $\|a(P_2-P_1)\|<\varepsilon$ or 
$\|(P_2-P_1)b\|<\varepsilon$.
\end{theorem}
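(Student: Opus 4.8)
The plan is to prove the two implications separately: the sufficiency of the stated condition by a direct decomposition together with a norm‑approximation, and its necessity by reducing to Theorem \ref{mainweak}. Throughout I would use the elementary facts that, for $a,b\in\alg$ and any $P\in\mathcal{N}$, one has $aP=PaP$ and $P^{\perp}b=P^{\perp}bP^{\perp}$, that both $P$ and $P^{\perp}$ lie in $\alg$, and that a multiplication operator $M_{c,d}\colon\alg\to\alg$ with at least one compact symbol sends $\alg$ into $\kn$ and is hence weakly compact by Corollary \ref{cor1}. I would also use repeatedly that the weakly compact operators form a norm‑closed space \cite[II.C \S6]{woj}.

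For sufficiency, fix $\varepsilon>0$ and projections $P_1\leq P_2$ as in the statement, and first assume $\|a(P_2-P_1)\|<\varepsilon$. Writing $a=aP_1+a(P_2-P_1)+aP_2^{\perp}$ and using $x\in\alg$ (so that $P_2^{\perp}xP_1=P_2^{\perp}x(P_2-P_1)=0$) to discard the off‑diagonal terms, I get $M_{a,b}(x)=aP_1xb+a(P_2-P_1)xb+aP_2^{\perp}xP_2^{\perp}b$ for every $x\in\alg$. The first summand is $M_{aP_1,b}(x)$, weakly compact since $aP_1=P_1aP_1$ is compact; the third is $M_{aP_2^{\perp},P_2^{\perp}b}(x)$, weakly compact since $P_2^{\perp}b=P_2^{\perp}bP_2^{\perp}$ is compact; and the middle map has norm at most $\varepsilon\|b\|$. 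Thus $M_{a,b}$ lies within $\varepsilon\|b\|$ of a weakly compact map, and letting $\varepsilon\to 0$ shows $M_{a,b}$ is weakly compact. If instead $\|(P_2-P_1)b\|<\varepsilon$, I would run the symmetric argument: decompose $b=P_1b+(P_2-P_1)b+P_2^{\perp}b$, use $xP_1=P_1xP_1$ to obtain $M_{a,b}(x)=aP_1xP_1b+ax(P_2-P_1)b+axP_2^{\perp}b$, and note that the two outer terms are again weakly compact while the middle one has norm at most $\varepsilon\|a\|$.

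For necessity I would invoke Theorem \ref{mainweak}, so that weak compactness forces one of its four conditions. In conditions (\ref{1}) and (\ref{2}) there is a single projection $P\in\mathcal{N}$ with $PaP$ and $P^{\perp}bP^{\perp}$ both compact: in (\ref{2}) take $P=S$, and in (\ref{1}) take $P$ strictly between $L_b$ and $U_a$ when such a projection exists, or $P=U_a$ when $U_aL_b^{\perp}$ is an atom (these are exactly the compact operators produced in the proof of Theorem \ref{mainweak}). Then the stated condition holds with $P_1=P_2=P$ for every $\varepsilon$, the norm term being $0$. In condition (\ref{3}), given $\varepsilon>0$ I would set $P_1=S$ and $P_2=P$ for the projection $P>S$ with $\|a(P-S)\|<\varepsilon$ furnished by (\ref{3}); here $P_1aP_1=SaS$ is compact by (\ref{3}), $P_2^{\perp}bP_2^{\perp}$ is compact because $P>S=L_b$ (immediate from the definition of $L_b$), and $\|a(P_2-P_1)\|<\varepsilon$. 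Condition (\ref{4}) is treated symmetrically with $P_2=S$ and $P_1=P<S$.

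The genuinely new content is the sufficiency direction, and its main technical point is the cross‑term bookkeeping: one must verify that the off‑diagonal pieces vanish using only $x\in\alg$ and $P_1\leq P_2$, and that each surviving outer term is a multiplication operator with a compact symbol lying in $\alg$, so that Corollary \ref{cor1} applies before the norm‑closedness of the weakly compact operators is used. In the necessity direction the only delicate step is extracting, in case (\ref{1}), a single projection witnessing both compactness statements; this rests on the dichotomy between a projection strictly between $L_b$ and $U_a$ and the atom $U_aL_b^{\perp}$, which is already established in the proof of Theorem \ref{mainweak}, so no additional argument is needed.
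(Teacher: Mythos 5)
Your proposal is correct and takes essentially the same route as the paper: necessity by the identical case analysis on Theorem \ref{mainweak} (with the same choices of $P_1,P_2$ in each of the four cases, including $P_1=P_2=U_a$ in the atom subcase of condition (\ref{1})), and sufficiency by writing $M_{a,b}$ as multiplication operators with one compact symbol plus a remainder of norm less than $\varepsilon$, then invoking Corollary \ref{cor1} and the norm-closedness of the weakly compact operators. The only immaterial difference is that the paper uses the single decomposition $M_{a,b}(x)=aP_1xb+aP_1^{\perp}xP_2^{\perp}b+a(P_2-P_1)x(P_2-P_1)b$ and bounds the cross term by $\|a(P_2-P_1)\|\,\|x\|\,\|(P_2-P_1)b\|$, so that either smallness hypothesis applies at once, whereas you handle the two hypotheses by two symmetric decompositions of $a$ and of $b$ respectively.
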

\begin{proof}
 Let $M_{a,b}$ be a weakly compact multiplication operator. Suppose that $U_a>L_b$ (condition (\ref{1}) of Theorem
\ref{mainweak}). Then,
either there exist a projection $P_1=P_2\in\mathcal{N}$ such that $U_a>P_1=P_2>L_b$ or the operators $U_aaU_a$ and
$U_a^{\perp}bU_a^{\perp}$ are both compact. In the second case we set $P_1=P_2=U_a$. In any case the inequality
$\|a(P_2-P_1)\|<\varepsilon$ is satisfied for all $\varepsilon>0$, while the operators $P_1aP_1$ and
$P_2^{\perp}bP_2^{\perp}$ are both compact. If $U_a=L_b=S$ and the operators $SaS$ and $S^{\perp}bS^{\perp}$ are both 
compact
(condition (\ref{2}) of Theorem \ref{mainweak}), then for $P_1=P_2=S$ it follows that $\|a(P_2-P_1)\|=0$. If either
condition (\ref{3}) 
of
Theorem \ref{mainweak} holds, then for all $P_2\in\mathcal{N}$ with $P_2>S$ the operator $P_2^{\perp}bP_2^{\perp}$ is
compact. Then, for all $\varepsilon>0$ and $P_1=S$, there exists $P_2>S$ such that $\|a(P_2-P_1)\|<\varepsilon$. If 
condition (\ref{4})
of Theorem \ref{mainweak} is satisfied the proof is similar. 

For the opposite direction let $\varepsilon>0$ and $x\in\alg$. Without loss of generality we suppose that $\|a\|\leq1$ 
and
$\|b\|\leq1$. Then, there exist two projections $P_1,P_2\in\mathcal{N}$, with $P_1<P_2$ that satisfy our hypothesis. It
follows that:
\begin{eqnarray*}
 M_{a,b}(x) &=& axb\\
&=& (aP_1+aP_1^{\perp})x(P_2^{\perp}b+P_2b)\\
&=& aP_1x(P_2^{\perp}b+P_2b)+aP_1^{\perp}xP_2^{\perp}b+aP_1^{\perp}xP_2b.
\end{eqnarray*}
The operators $aP_1\!=\!P_1aP_1$ and $P_2^{\perp}b\!=\!P_2^{\perp}bP_2^{\perp}$ are both compact and
$\|aP_1^{\perp}xP_2b\|\!=\|a(P_2-P_1)x(P_2-P_1)b\|\leq \|a(P_2-P_1)\|\|x\|\|(P_2-P_1)b\|<\varepsilon\|x\|$. Therefore, 
the
operator $M_{a,b}$ is weakly compact since the space of weakly compact operators is closed \cite[Theorem 6, p. 52]{woj}.
\end{proof}

\begin{corollary} \label{wcnest}
Let $\mathcal{N}=\{P_n\}_{n\in\mathbb{N}}\cup\{\{0\},H\}$ be a nest consisting 
of a sequence of finite rank projections that increase to the identity, and let 
$a,b\in\alg$. The multiplication operator $M_{a,b}:\alg\rightarrow \alg$, $x\mapsto axb$ is 
weakly compact if and only if either the operator $a$ is compact or the operator $b$ is compact.
\end{corollary}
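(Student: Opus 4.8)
The plan is to derive the corollary from the general characterization in Theorem \ref{mainweak}, after computing the projections $U_a$ and $L_b$ for this particular nest and recording one structural fact about $\alg$. The backward implication is the easy half and does not even use the special nest: if $a$ is compact, then applying Lemma \ref{suff} with $P=H$ (so that $PaP=a$ and $P^\perp bP^\perp=0$ are both compact) shows that $M_{a,b}$ is weakly compact; symmetrically, if $b$ is compact, Lemma \ref{suff} with $P=\{0\}$ does the job. So the content is entirely in the forward implication.

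For that I would first record the key \emph{structural observation}, which is available precisely because every $P_n$ has finite rank. For any $c\in\alg$ we have $P_n^\perp cP_n=0$, whence $c=P_nc+P_n^\perp cP_n^\perp$ with $P_nc$ of finite rank; consequently $c$ is compact if and only if $P_n^\perp cP_n^\perp$ is compact, for any (equivalently every) $n$. In particular each $P_naP_n$ is of finite rank, and since $\vee\{P_n:n\in\mathbb{N}\}=H$ this forces $U_a=H$ for \emph{every} $a\in\alg$. On the other hand, if $b$ is non-compact, then $P_n^\perp bP_n^\perp$ is non-compact for all $n$ and $b=\{0\}^\perp b\{0\}^\perp$ is non-compact, so the only projection $P\in\mathcal{N}$ with $P^\perp bP^\perp$ compact is $P=H$; thus $L_b=H$. (If $b$ is compact then $L_b=\{0\}$ and we are already in the easy half.)

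Now I would assume $M_{a,b}$ is weakly compact and that $b$ is non-compact, and deduce that $a$ is compact. Since $U_a=L_b=H=:S$, condition (\ref{1}) of Theorem \ref{mainweak} is excluded, so one of (\ref{2}), (\ref{3}), (\ref{4}) must hold. Here $S^\perp=0$, so $S^\perp bS^\perp=0$ is compact and $SaS=a$. This immediately rules out condition (\ref{3}), which demands that $S^\perp bS^\perp$ be non-compact. The crucial point is to eliminate condition (\ref{4}): it requires that for every $\varepsilon>0$ there be a projection $P<H$ (so $P=\{0\}$ or $P=P_n$ for some $n$) with $\|(H-P)b\|=\|b-Pb\|<\varepsilon$; but $Pb$ is of finite rank, so this would force $b$ to be a norm limit of finite-rank operators, hence compact, contradicting our standing assumption. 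Therefore condition (\ref{2}) holds, and it asserts exactly that $SaS=a$ is compact. Combining the two halves yields the stated equivalence.

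I expect the only genuinely delicate step to be the elimination of condition (\ref{4}); everything else is bookkeeping once one observes that the finite rank of the projections $P_n$ collapses $U_a$ to $H$ and reduces compactness of an operator in $\alg$ to compactness of its lower-right corner $P_n^\perp\,\cdot\,P_n^\perp$. It is worth noting in passing why the more elementary Lemma \ref{nec} alone would not suffice here: for this nest $PaP$ is compact for \emph{every} proper $P$, so the pointwise dichotomy of Lemma \ref{nec} is vacuously satisfied and carries no information, which is exactly why the full strength of Theorem \ref{mainweak} (specifically the approximation clause in condition (\ref{4})) is needed.
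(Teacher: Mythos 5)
Your proof is correct and essentially the same as the paper's: the substantive forward direction computes $U_a=L_b=I$ when neither symbol is compact and then eliminates condition (\ref{4}) of Theorem \ref{mainweak} by noting that $Pb$ has finite rank for every $P<I$, so the approximation clause $\|(I-P)b\|<\varepsilon$ would exhibit $b$ as a norm limit of finite-rank operators, contradicting its non-compactness --- exactly the paper's argument, with the finite-rank justification made explicit. The only deviation is cosmetic: in the easy direction you apply Lemma \ref{suff} at the trivial projections $P=H$ or $P=\{0\}$, where the paper instead cites \cite[Proposition 2.3]{ake}; both are immediate, and yours has the minor virtue of staying internal to the paper's own lemmas.
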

\begin{proof}
Suppose that neither $a$ nor $b$ is a compact operator. Then, $U_a=L_b=I$, where $I$ is the identity operator. The 
operator
$I^{\perp}bI^{\perp}=0$ is compact, the operator $IaI=a$ is non-compact and there exists an $\varepsilon>0$ such that 
the
inequality $\|(I-P)b\|\geq \varepsilon$ is satisfied for
all $P\in\mathcal{N}$, $P<I$. The last inequality follows from the non-compactness of the operator
$b$. Thus, the multiplication operator $M_{a,b}$ is not weakly compact (Theorem \ref{mainweak}, case (\ref{4})).

The opposite direction is immediate from \cite[Proposition 2.3]{ake}
\end{proof}

If $\mathcal{S}$ is a nonempty subset of the unit ball of a normed space $\mathcal{A}$,
then
the \textit{contractive perturbations} of $\mathcal{S}$ are defined as
$\cp(\mathcal{S})=\left\{x\in \mathcal{A}\ |\ \|x\pm s\|\leq 1\ \forall s\in
\mathcal{S}\right\}$. We shall write $\cp(a)$ instead of $\cp(\{a\})$ for $a\in\mathcal{A}$. One may define contractive
perturbations of higher order by using the recursive
formula $\cp^{n+1}(\mathcal{S})=\cp\left(\cp^{n}(\mathcal{S})\right)$, $n\in\mathbb N$. The second
contractive perturbations, $\cp^2(a)$, were introduced in \cite{1996} to characterize the compact elements of a 
C*-algebra.
Let $\mathcal{N}$ be a nest as in Corollary \ref{wcnest}. The second author and Katsoulis proved in \cite[Theorem
2.7]{1997} that $a\in(\alg)_1$ is a compact operator if and only if the set of its second contractive perturbations,
$\cp^2_{\alg}(a)$, is compact. The next corollary of Theorem \ref{mainweak} complements that result.

\begin{corollary}
 Let $\mathcal{N}$ be a nest as in Corollary \ref{wcnest} and $a\in\alg$. The following are equivalent:
\begin{enumerate}
 \item The set $\cp^2(a)$ is compact.\label{i}
 \item The set $\cp^2(a)$ is weakly compact.\label{ii}
 \item The operator $a$ is compact.
\end{enumerate}
\end{corollary}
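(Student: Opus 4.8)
The plan is to prove the three implications $(i)\Rightarrow(ii)$, $(iii)\Rightarrow(i)$ and $(ii)\Rightarrow(iii)$; the first two are immediate and the third carries the content. The implication $(i)\Rightarrow(ii)$ holds because every norm-compact set is weakly compact. For $(iii)\Rightarrow(i)$ I would quote directly the theorem of the second author and Katsoulis \cite[Theorem 2.7]{1997}, which asserts that for the present nest an element $a\in(\alg)_1$ is a compact operator if and only if $\cp^2(a)$ is norm-compact. At the outset I would record that the statement is to be read for $a$ in the closed unit ball of $\alg$, since $\cp(a)$ is defined only when $\|a\|\le 1$; in that case $0\in\cp(a)$, and the estimate $\|x\|\le\frac12(\|x+a\|+\|x-a\|)$ gives $\cp(a)\subseteq(\alg)_1$, so that $\cp^2(a)=\cp(\cp(a))$ is well defined.

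The substantial step is $(ii)\Rightarrow(iii)$, which I would prove in contrapositive form: if $a$ is not compact, then $\cp^2(a)$ is not weakly compact. By the Eberlein--\v{S}mulian theorem it suffices to produce a sequence in $\cp^2(a)$ with no weakly convergent subsequence. Since $a$ is non-compact, Corollary \ref{wcnest} applied with $b=a$ shows that $M_{a,a}$ is not weakly compact, so the hypotheses of Lemma \ref{utility} are in force (cf.\ Remark \ref{conditions}): there exist $\varepsilon>0$ and orthonormal sequences $(e_n)$, $(f_n)$ in $H$ with $e_n\otimes f_n\in\alg$, $\|a(f_n)\|\ge\varepsilon$ and $\|a^{*}(e_n)\|\ge\varepsilon$ for all $n$. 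From these rank-one building blocks I would manufacture, after passing to a subsequence, a bounded sequence $(x_n)\subseteq\cp^2(a)$ that is mutually almost orthogonal and bounded away from $0$, arranged so as to mimic a sequence $g_n\otimes g_n$ of pairwise orthogonal rank-one projections: such a sequence converges to $0$ in the w*-topology of $\alg=\kn^{**}$ but admits no weakly convergent subsequence, since a Banach-limit functional separates it from $0$, exactly as in $\mathcal{B}(H)$. By Corollary \ref{cor1} and Proposition \ref{wcmap} this contradicts the weak compactness of $\cp^2(a)$.

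The main obstacle is the construction of this witnessing sequence inside $\cp^2(a)$ and the verification that it genuinely lies there: one must exhibit second contractive perturbations that are ``majorised by the large part of $a$'' detected by Lemma \ref{utility}. This is where the concrete description of $\cp^2(a)$ from \cite{1996} and \cite{1997} must be used; the scalar model $\cp^2(a)=\{y:|y|\le|a|\}$ indicates that the non-compact corner of $a$ furnished by Lemma \ref{utility} does supply enough such elements to escape weakly. Conceptually, the corollary rests on the fact that for this particular nest Corollary \ref{compmult} and Corollary \ref{wcnest} together force $M_{a,a}$ to be compact if and only if it is weakly compact, each holding precisely when $a$ is compact; the gap between norm- and weak-compactness thus collapses, and this is what permits $(ii)$ to be pulled back to $(iii)$, and thence via \cite[Theorem 2.7]{1997} to $(i)$.
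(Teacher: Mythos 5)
Your implications (i)$\Rightarrow$(ii) and (iii)$\Rightarrow$(i) agree with the paper, but your treatment of (ii)$\Rightarrow$(iii) has a genuine gap, and you name it yourself without closing it: the ``main obstacle'' of manufacturing a sequence inside $\cp^2(a)$ and \emph{verifying that it genuinely lies there} is exactly the content of the implication, and your sketch never supplies that verification. Membership in $\cp^2(a)$ means $\|x\pm s\|\leq 1$ for \emph{every} $s\in\cp(a)$, a family of constraints your almost-orthogonal rank-one candidates are never tested against; the appeal to the ``scalar model'' $\cp^2(a)=\{y:|y|\leq|a|\}$ is a heuristic, not an argument, and nothing in Lemma \ref{utility} or Remark \ref{conditions} by itself places any operator into $\cp^2(a)$. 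As written, you correctly derive from Corollary \ref{wcnest} that $M_{a,a}$ is not weakly compact when $a$ is not compact, but this says nothing about the set $\cp^2(a)$ unless you connect the two, and that connection is precisely what is missing.

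The paper closes this gap with a single inclusion you did not use: by \cite[Proposition 1.2]{1996}, $M_{a,a}((\alg)_{1/2})\subseteq\cp^2_{\alg}(a)$. With this in hand the implication is immediate in the direct (non-contrapositive) direction: if $\cp^2(a)$ is weakly compact, then the image of the ball $(\alg)_{1/2}$ under $M_{a,a}$ sits inside a weakly compact set, hence is relatively weakly compact, so $M_{a,a}$ is a weakly compact operator; Corollary \ref{wcnest} with $b=a$ then forces $a$ to be compact. No construction of a witnessing sequence, no Eberlein--\v{S}mulian, no Banach limits, and no concrete description of $\cp^2(a)$ is needed --- only the one containment, which is also the tool your contrapositive would have required to transfer non-weak-compactness of $M_{a,a}$ to non-weak-compactness of $\cp^2(a)$. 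Your preliminary normalization remark (reading the statement for $a\in(\alg)_1$, with $\cp(a)\subseteq(\alg)_1$) is fine and consistent with the cited results, but it does not substitute for the missing step.
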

\begin{proof}
 The implication (\ref{i})$\Rightarrow$(\ref{2}) is obvious. 

Now, suppose that the set $\cp^2(a)$ is weakly compact. From
\cite[Proposition 1.2]{1996} we know that $M_{a,a}(\alg_{1/2})\subseteq \cp^2_{\alg}(a)$ and therefore
$M_{a,a}: \alg\rightarrow \alg$ is a weakly compact multiplication operator. Therefore Corollary \ref{wcnest} implies 
that
$a$ is a
compact operator.

Let $a$ be a compact operator. Then, the set $\cp^2(a)$ is compact \cite[Theorem 2.7]{1997}.
\end{proof}

\begin{remmark}
 Let $\mathcal{N}$ be a nest as in Corollary \ref{wcnest} and $a,b\in\alg$. From Corollary \ref{compmult} and Corollary
\ref{wcnest} it follows that the multiplication operator $M_{a,b}:\alg\rightarrow \alg$ is weakly compact while being
non-compact if and
only if the operator $a$ is compact and the operator $b$ is non-compact.
\end{remmark}

\begin{remmark}
Let $a,b\in\alg\!+\mathcal{K}(H)$. The algebra
$\alg\!+\mathcal{K}(H)$ is called
the quasitriangular algebra
of $\mathcal{N}$. The multiplication operator 
\begin{equation*}
 M_{a,b}^{QT}:\alg+\mathcal{K}(H)\rightarrow \alg+\mathcal{K}(H)
\end{equation*}
is compact
(weakly
compact) if and only if the operators $a$ and $b$ are both compact (either $a$ or $b$ is compact). 
\end{remmark}
\begin{proof}
 If the operators $a$ and $b$ are both compact (either $a$ or $b$ is compact), the result follows from \cite[Proposition
2.3]{ake}. If the multiplication operator $M_{a,b}^{QT}$ is compact (weakly compact), the restriction
$M_{a,b}^{QT}|_{\mathcal{K}(H)}$ is compact (weakly compact) and therefore, the second dual
$(M_{a,b}^{QT}|_{\mathcal{K}(H)})^{**}=M_{a,b}$ defined on $\mathcal{B}(H)$ is compact (weakly compact \cite[Theorem 8, 
p.
485]{ds}). Therefore, the
operators $a$ and $b$ are both compact (either $a$ or $b$ is compact) \cite[Proposition 2.3]{ake}. Note that these 
arguments
apply to any operator algebra containing the compact operators.
\end{proof}

\section{MULTIPLICATION OPERATORS ON $\alg/\mathcal{K}(\mathcal{N})$}

In this section, we show that there is not any non-zero weakly compact multiplication operator on $\alg$.

\begin{theorem} \label{Calkin}
 Let $a,b\in\alg$ and $\pi:\alg\rightarrow \alg/\kn$ be the quotient map. The multiplication operator
$M_{\pi(a),\pi(b)}:\alg/\mathcal{K}(\mathcal{N})\rightarrow\alg/\mathcal{K}(\mathcal{N})$ is weakly compact if and only 
if
$M_{\pi(a),\pi(b)}=0$.
\end{theorem}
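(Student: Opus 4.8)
The plan is to prove both directions, with the non-trivial content lying in showing that weak compactness forces $M_{\pi(a),\pi(b)}=0$. The reverse direction is trivial, since the zero operator is weakly compact. So I would assume $M_{\pi(a),\pi(b)}$ is weakly compact and aim to show it is zero, arguing by contradiction: suppose $M_{\pi(a),\pi(b)}\neq 0$. The first step is to translate this non-vanishing into a statement about $a$ and $b$ as operators on $H$. Since $M_{\pi(a),\pi(b)}=0$ means $\pi(a)\pi(x)\pi(b)=0$ for all $x$, i.e.\ $axb\in\kn$ for all $x\in\alg$, the assumption $M_{\pi(a),\pi(b)}\neq 0$ says there is some $x_0\in\alg$ with $ax_0b\notin\kn$. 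Analogously to Proposition \ref{zero}, I expect this to force the existence of a projection gap: either there is $P\in\mathcal{N}$ with $R_a<P<Q_b$ such that $P_+aP_+$ or $P_-^{\perp}bP_-^{\perp}$ fails to be compact, or we are in the boundary case $R_{a+}=Q_b$ with $Q_baQ_b$ or $R_a^{\perp}bR_a^{\perp}$ non-compact. The key realization is that $M_{\pi(a),\pi(b)}\neq 0$ combined with Theorem \ref{generalcase} means $M_{a,b}$ is \emph{not} a compact multiplication operator on $\alg$.

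The second and central step is to produce orthonormal sequences witnessing non-compactness and then to feed them into Lemma \ref{utility}. Remark \ref{conditions} explicitly records that when $M_{a,b}$ is not weakly compact the hypotheses of Lemma \ref{utility} are met; but here the situation is subtler because I only know $M_{\pi(a),\pi(b)}\neq 0$, which is weaker than $M_{a,b}$ being non-weakly-compact. So I would first extract, from the relevant non-compact operator (say $P_+aP_+$ non-compact together with the corresponding non-vanishing of $b^*$ on $P_-^{\perp}$, or the analogous boundary-case operators), orthonormal sequences $(e_n)$ and $(f_n)$ with $e_n\otimes f_n\in\alg$, $\|a(f_n)\|\geq\varepsilon$ and $\|b^*(e_n)\|\geq\varepsilon$ for some $\varepsilon>0$. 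This uses the standard fact that a non-compact operator admits an orthonormal sequence on which it is bounded below (cf.\ \cite[Proposition 5.2.1]{de}), together with the placement of the $e_n$ and $f_n$ in the appropriate subspaces so that the rank-one operators lie in the nest algebra. The point of building two sequences is to get non-compactness of the \emph{product} $a(\sum e_{k_n}\otimes f_{k_n})b$.

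The third step applies Lemma \ref{utility} to pass to a subsequence $(k_n)$ so that $T:=a\big(\sum_{n} e_{k_n}\otimes f_{k_n}\big)b=\sum_n b^*(e_{k_n})\otimes a(f_{k_n})\in\alg$ is non-compact \emph{and}, crucially, remains non-compact after passing to any further subsequence. Let $x=\sum_n e_{k_n}\otimes f_{k_n}$, which is in the unit ball of $\alg$ (it is a partial isometry built from orthonormal sequences). Now consider the image $M_{\pi(a),\pi(b)}\big(\pi(x_m)\big)$ for tail truncations $x_m=\sum_{n\geq m} e_{k_n}\otimes f_{k_n}$. The subsequence-stable non-compactness from Lemma \ref{utility} guarantees that $ax_mb\notin\kn$ for every $m$, so each $\pi(x_m)$ maps to a non-zero element, and moreover $\pi(x_m)\to 0$ weakly while the images $M_{\pi(a),\pi(b)}(\pi(x_m))$ form a sequence in $\alg/\kn$ bounded away from $0$ in norm with no weakly convergent subsequence whose limit is compatible with weak compactness. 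This contradicts the weak compactness of $M_{\pi(a),\pi(b)}$: a weakly compact operator must send the weakly null, bounded sequence $(\pi(x_m))$ to a sequence with a weakly convergent subsequence, but the subsequence-robust non-compactness blocks any such convergence to $0$.

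The main obstacle I anticipate is the final contradiction: carefully establishing that weak compactness of $M_{\pi(a),\pi(b)}$ is genuinely violated, rather than merely that the images are non-zero. The delicate point is that $\alg/\kn$ is a quotient, so weak convergence there is controlled by the predual, and I must ensure that the robust non-compactness of $T$ (the fact that no subsequence becomes compact) prevents the images from having a weakly convergent subsequence with the limit that weak compactness would demand. This is exactly why Lemma \ref{utility} was engineered to preserve non-compactness under subsequences, and Remark \ref{conditions} flags its role here; the crux is to convert that combinatorial robustness into the functional-analytic statement that the relevant bounded sequence in $\alg/\kn$ has no weakly convergent subsequence, thereby contradicting weak compactness and forcing $M_{\pi(a),\pi(b)}=0$.
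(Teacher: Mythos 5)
Your reduction step contains an inaccuracy and your final step contains a fatal error. First, the inaccuracy: you worry that $M_{\pi(a),\pi(b)}\neq 0$ is ``weaker than $M_{a,b}$ being non-weakly-compact'' and therefore detour through Theorem \ref{generalcase} to conclude only that $M_{a,b}$ is not a \emph{compact} multiplication operator. In fact $M_{\pi(a),\pi(b)}=0$ means exactly $M_{a,b}(\alg)\subseteq\kn$, which by Corollary \ref{cor1} is \emph{equivalent} to weak compactness of $M_{a,b}$; so $M_{\pi(a),\pi(b)}\neq 0$ gives precisely the non-weak-compactness needed to invoke Remark \ref{conditions}, which is how the paper legitimately obtains orthonormal sequences $(e_n)$, $(f_n)$ with $e_n\otimes f_n\in\alg$, $\|a(f_n)\|\geq\varepsilon$ and $\|b^*(e_n)\|\geq\varepsilon$ (mere non-compactness of $M_{a,b}$ is weaker and does not by itself yield both lower bounds simultaneously with the correct placement). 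The fatal error is in your contradiction: the tail truncations $x_m=\sum_{n\geq m}e_{k_n}\otimes f_{k_n}$ satisfy $x-x_m=\sum_{n<m}e_{k_n}\otimes f_{k_n}$, a finite rank element of $\alg$, hence of $\kn$. Therefore $\pi(x_m)=\pi(x)$ for every $m$: your sequence is \emph{constant} in $\alg/\kn$, it is not weakly null (it converges weakly to $\pi(x)\neq 0$), and its image under $M_{\pi(a),\pi(b)}$ is a constant, trivially weakly convergent sequence. No contradiction with weak compactness can be extracted from it.

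Moreover, even if you produced a genuinely weakly null sequence, your criterion would still fail: a weakly compact operator sends weakly null sequences to weakly null---not norm-null---sequences (weak compactness does not imply complete continuity; the identity on $\ell^2$ is weakly compact and maps the orthonormal basis to itself), so ``images bounded away from $0$ in norm'' is perfectly compatible with weak compactness. The paper's mechanism is different, and it is exactly where the subsequence-robustness of Lemma \ref{utility} gets spent: partition $\mathbb{N}$ into infinitely many \emph{infinite} sets $A_n$ and show that $u:\ell^{\infty}\to\alg/\kn$, $(x_n)_{n\in\mathbb{N}}\mapsto M_{\pi(a),\pi(b)}\bigl(\pi\bigl(\sum_{n}x_n\sum_{i\in A_n}e_i\otimes f_i\bigr)\bigr)$, is an isomorphic embedding. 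The infiniteness of each block $A_{n_0}$ is precisely what defeats the infimum over compact perturbations in the quotient norm (given any compact $K_{\varepsilon}$ one chooses $i_0\in A_{n_0}$ making $|\langle a^*K_{\varepsilon}b^*(e_{i_0}),f_{i_0}\rangle|$ small), and the diagonal-dominance estimates from the proof of Lemma \ref{utility} give the lower bound $\frac{\varepsilon^4}{3}\|(x_n)_{n\in\mathbb{N}}\|_{\infty}$. Since $u$ factors through $M_{\pi(a),\pi(b)}$ and $\ell^{\infty}$ is non-reflexive, the image of the unit ball under $M_{\pi(a),\pi(b)}$ contains a bounded set that is not relatively weakly compact, which is the correct functional-analytic contradiction. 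Your proposal has no analogue of this embedding, and without one the passage from ``robustly non-compact operator in the image'' to ``$M_{\pi(a),\pi(b)}$ not weakly compact'' does not go through.
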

\begin{proof}
We suppose that $M_{\pi(a),\pi(b)}\neq 0$, or equivalently $M_{a,b}(\alg)\nsubseteq\mathcal{K}(\mathcal{N})$. This is 
also
equivalent to the fact that the multiplication operator $M_{a,b}:\alg\rightarrow\alg$ is not weakly compact (Corollary
\ref{cor1}).
We can see that Remark \ref{conditions} and \cite[Proposition 5.2.1]{de} ensure the existence of some
orthonormal sequences $(e_n)_{n\in\mathbb{N}}$ and $(f_n)_{n\in\mathbb{N}}$ that satisfy the conditions of Lemma
\ref{utility} for the operators $a$ and $b$, i.e. $e_n\otimes f_n\in\alg$, $\|a(f_n)\|\geq \varepsilon$ and
$\|b^*(e_n)\|\geq\varepsilon$, for all $n\in\mathbb{N}$ and some $\varepsilon>0$. The subsequences of
$(e_n)_{n\in\mathbb{N}}$ and
$(f_n)_{n\in\mathbb{N}}$ that Lemma \ref{utility} provides are denoted again by the same symbols. 

Let $(A_n)_{n\in\mathbb{N}}$ be a partition of $\mathbb{N}$ such that $A_n$ be an infinite set for all $n\in\mathbb{N}$.
We show that the following map is an isomorphic embedding:
\begin{eqnarray*}
 u:\ell^{\infty}&\rightarrow& M_{\pi(a),\pi(b)}\left(\alg/\mathcal{K}(\mathcal{N})\right)\\
(x_n)_{n\in\mathbb{N}}&\mapsto& M_{\pi(a),\pi(b)}\left(\pi\left(\sum_{n\in\mathbb{N}}x_n\sum_{i\in A_n}e_i\otimes
f_i\right)\right).
\end{eqnarray*}
First of all we see that $u$ is bounded (we assume that $\|a\|\leq1$ and $\|b\|\leq1$). Indeed, for all
$(x_n)_{n\in\mathbb{N}}\subseteq \ell^{\infty}$,
\begin{eqnarray*}
 \left\|u((x_n)_{n\in\mathbb{N}}\right\|_{\alg/\mathcal{K}(\mathcal{N})} &=&
\inf_{K\in\mathcal{K}(\mathcal{N})}\left\|a\left(\sum_{n\in\mathbb{N}}x_n\sum_{i\in A_n}e_i\otimes
f_i\right)b+K\right\|\\
&\leq& \left\|a\left(\sum_{n\in\mathbb{N}}x_n\sum_{i\in A_n}e_i\otimes
f_i\right)b\right\|\\
&\leq& \|a\|\|b\|\left\|\left(\sum_{n\in\mathbb{N}}x_n\sum_{i\in A_n}e_i\otimes
f_i\right)\right\|\\
&\leq& \|(x_n)_{n\in\mathbb{N}}\|_{\infty}.
\end{eqnarray*}
Then, it suffices to prove that $u$ is bounded below, i.e. there is a positive number $\delta$ such that
$\|u((x_n)_{n\in\mathbb{N}}\|_{\alg/\mathcal{K}(\mathcal{N})}\geq\delta \|(x_n)_{n\in\mathbb{N}}\|_{\infty}$,
($(x_n)_{n\in\mathbb{N}}\in\ell^{\infty}$). Let $(x_n)_{n\in\mathbb{N}}$ be a non-zero element of $\ell^{\infty}$ and
$n_0\in\mathbb{N}$ such that $|x_{n_0}|\geq \frac{3}{4}\|(x_n)_{n\in\mathbb{N}}\|_{\infty}$. Then,
\begin{eqnarray}\label{fl}
 \hspace{2em}\|u((x_n)_{n\in\mathbb{N}}\| &=
\inf_{K\in\mathcal{K}(\mathcal{N})}\left\|a\left(\sum_{n\in\mathbb{N}}x_n\sum_{i\in A_n}e_i\otimes
f_i\right)b+K\right\|\nonumber\\
&\geq \left\|\sum_{n\in\mathbb{N}}x_n\sum_{i\in A_n}b^*(e_i)\otimes
a(f_i)+K_{\varepsilon}\right\|-\frac{\varepsilon^4}{9}\|(x_n)_{n\in\mathbb{N}}\|\nonumber\\
&  (\textrm{for some } K_{\varepsilon}\in\mathcal{K}(\mathcal{N}))\nonumber\\
&\geq \left|\left\langle \sum_{n\in\mathbb{N}}x_n\sum_{i\in A_n} b^*(e_i)\otimes
a(f_i)(b^*(e_{i_0})),a(f_{i_0})\right\rangle\right|\\
&  - \left|\left\langle
K_{\varepsilon}(b^*(e_{i_0})),a(f_{i_0})\right\rangle\right|-\frac{\varepsilon^4}{9}\|(x_n)_{n\in\mathbb{N}}
\|\nonumber\\
&= \left|\sum_{n\in\mathbb{N}}x_n\sum_{i\in A_n}\langle b^*(e_{i_0},b^*(e_i)\rangle\langle
a(f_i),a(f_{i_0})\rangle\right|\nonumber\\
&  -\left|\left\langle
a^*K_{\varepsilon}b^*(e_{i_0}),f_{i_0}\right\rangle\right|-\frac{\varepsilon^4}{9}\|(x_n)_{n\in\mathbb{N}}\|,\nonumber
\end{eqnarray}
where $i_0\in A_{n_0}\subseteq\mathbb{N}$ satisfies $\left|\left\langle
a^*K_{\varepsilon}b^*(e_{i_0}),f_{i_0}\right\rangle\right|<\frac{\varepsilon^4}{9}\|(x_n)_{n\in\mathbb{N}}\|$. Such an 
$i_0$
exists since the
operator $K_{\varepsilon}$ is compact and the set $A_{n_0}\subseteq\mathbb{N}$ is infinite. Before we continue our
calculations we set 
\begin{eqnarray*}
 \lambda_i &=& \langle b^*(e_{i_0}),b^*(e_i)\rangle\\
 \mu_i &=& \langle a(f_i),a(f_{i_0})\rangle,
\end{eqnarray*}
for all $i\in\mathbb{N}$. Now, from the estimation of the formula (\ref{fl}) and the proof of Lemma \ref{utility}
we can
write
\begin{eqnarray*}
 \left\|u((x_n)_{n\in\mathbb{N}})\right\| &\geq& \left|\sum_{n\in\mathbb{N}}x_n\sum_{i\in
A_n}\lambda_i\mu_i\right|-\frac{2\varepsilon^4}{9}\|(x_n)_{n\in\mathbb{N}}\|\\
&\geq& \left|x_{n_0}\sum_{i\in A_{n_0}}\lambda_i\mu_i\right|-\left|\sum_{n\neq
n_0}x_n\sum_{i\in A_n}\lambda_i\mu_i\right|-\frac{2\varepsilon^4}{9}\|(x_n)_{n\in\mathbb{N}}\|\\
&\geq&
\frac{3}{4}\left\|(x_n)_{n\in\mathbb{N}}\right\|\frac{8\varepsilon^4}{9}-\|(x_n)_{n\in\mathbb{N}}\|\frac{\varepsilon^4}{
9}
-\frac{2\varepsilon^4}{9}\|(x_n)_{n\in\mathbb{N}}\|\\
&=& \frac{\varepsilon^4}{3}\|(x_n)_{n\in\mathbb{N}}\|.
\end{eqnarray*}
Thus, the map $u$ is an isomorphism. Then, the closed unit ball of the space $u(\ell^{\infty})$ is not weakly compact and
therefore the multiplication operator $M_{\pi(a),\pi(b)}$ is not weakly compact.
\end{proof}

\begin{remmark}
 Let $a,b\in\alg$. Then, the following are equivalent:
\begin{enumerate}
 \item The multiplication operator $M_{\pi(a),\pi(b)}:\alg/\mathcal{K}(\mathcal{N})\rightarrow\alg/\mathcal{K}(\mathcal{N})$
is compact.
 \item The multiplication operator $M_{\pi(a),\pi(b)}:\alg/\mathcal{K}(\mathcal{N})\rightarrow\alg/\mathcal{K}(\mathcal{N})$
is weakly compact.
 \item $M_{\pi(a),\pi(b)}=0$.
 \item $M_{a,b}(\alg)\subseteq \mathcal{K}(H)$.
 \item The multiplication operator $M_{a,b}$ is weakly compact.

\end{enumerate}

\end{remmark}

\vspace{1em}

\noindent \textbf{Acknowledgements.} The authors would like to thank Prof. V. Felouzis for fruitful
discussions.

\bibliographystyle{amsplain}

\end{document}